\newtheorem{theorem}{Theorem}[section]
\newtheorem{corollary}[theorem]{Corollary}
\newtheorem{proposition}[theorem]{Proposition}
\newtheorem{claim}[theorem]{Claim}
\theoremstyle{definition}
\theoremstyle{remark}
\newtheorem{remark}[theorem]{Remark}
\numberwithin{equation}{section}
\begin{document}
\title[Shrinkers with curvature pinching conditions are compact]
{Shrinkers with curvature pinching conditions\\
are compact}

\author{Guoqiang Wu}
\address{School of Science, Zhejiang Sci-Tech University, Hangzhou 310018, China}
\email{gqwu@zstu.edu.cn}

\author{Jia-Yong Wu}
\address{Department of Mathematics, Shanghai University, Shanghai 200444, China;
and Newtouch Center for Mathematics of Shanghai University, Shanghai 200444, China}
\email{wujiayong@shu.edu.cn}
\thanks{}
\subjclass[2010]{Primary 53C25; Secondary 53C20, 53C21.}
\dedicatory{}
\date{\today}

\keywords{gradient shrinking Ricci soliton, compact manifold, classification,
 eigenvalues pinching.}
\begin{abstract}
In this paper, we give various curvature pinching conditions such that 
shrinkers are compact. On one hand, we prove that shrinkers with positive 
Ricci curvature are compact when they have bounded curvature and certain 
curvature pinching conditions. On the other hand, we prove that shrinkers 
with certain asymptotically nonnegative sectional curvature are compact. 
As applications, some related classifications of shrinkers are provided.
\end{abstract}
\maketitle

\section{Introduction}
For an $n$-dimensional complete Riemannian manifold $(M,g)$ and a smooth
potential function $f$ on $(M,g)$, the triple $(M, g, f)$ is called a
\emph{gradient shrinking Ricci soliton} or \emph{shrinker} (see \cite{[Ham]})
if
\begin{align}\label{Eq1}
Ric+\mathrm{Hess}\,f=\frac 12g,
\end{align}
where $Ric$ is the Ricci curvature of $(M,g)$ and $\text{Hess}\,f$ is
the Hessian of $f$. On $(M, g, f)$, there always exists a minimum point
$p\in M$ of $f$, but it possibly is not unqiue; see \cite{[HM]}. Shrinkers
are viewed as a natural extension of Einstein manifolds. More importantly,
shrinkers play an important role in the Ricci flow as they correspond to
some self-similar solutions and arise as limits of dilations of Type I
singularities in the Ricci flow \cite{[Ham]}. Shrinkers can also be
regarded as critical points of the Perelman's entropy functional and play
a significant role in Perelman's resolution of the Poincar\'e conjecture
\cite{[Pe],[Pe2],[Pe3]}.

In \cite{[Pe2]}, Perelman showed that any $3$-dimensional noncollapsed shrinker
with positive and bounded sectional curvature must be compact. Later, combining
Ni-Wallach's work \cite{[NW]} and Cao-Chen-Zhu's observation \cite{[CCZ]}, one
can finish the classification of three dimensional shrinker without any assumption.
Hence one finally concludes that any $3$-dimensional shrinker with positive sectional
curvature must be compact. Based on this, Cao \cite{[Cao]} conjectured that there does
not exist any $n$-dimensional ($n\ge 4$) noncompact noncollapsed shrinker
with positive sectional curvature. In \cite{[Na]}, Naber proved that any
$4$-dimensional shrinker with positive Ricci curvature and bounded nonnegative
curvature operator must be compact. In \cite{[MW]}, Munteanu-Wang completely
answered Cao's question by exploring Chow-Lu-Yang's argument \cite{[CLY]}.
They showed that any $n$-dimensional shrinker with positive Ricci curvature
and nonnegative sectional curvature must be compact.

In this paper, we first improve Munteanu-Wang's compact result \cite{[MW]}
to the $2$nd-Ricci curvature case.  After finishing the first version
of this paper, we learned that Li-Ni had proven this result from the
viewpoint of the $PIC_1$ condition; see Theorem 3.1 in \cite{[LN]}.
Here we propose the result from the $k$th-Ricci curvature.

\begin{theorem}\label{thm1}(Li-Ni \cite{[LN]})
Let $(M,g, f)$ be an $n$-dimensional shrinker with positive Ricci curvature
and nonnegative $2$nd-Ricci curvature. Then $(M,g, f)$ must be compact.
\end{theorem}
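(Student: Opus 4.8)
The plan is to argue by contradiction: suppose $(M,g,f)$ is noncompact and produce a metric splitting that violates $\mathrm{Ric}>0$. Normalize $f$ so that $R+|\nabla f|^2=f$, and recall the standard shrinker identities $\Delta f=\tfrac n2-R$, $\nabla R=2\,\mathrm{Ric}(\nabla f)$, and $\Delta_f R=R-2|\mathrm{Ric}|^2$ with $\Delta_f:=\Delta-\nabla f\cdot\nabla$. Let $p$ be a minimum point of $f$. By the Cao--Zhou estimates $f$ is proper with $\tfrac14(d(p,x)-c)^2\le f(x)\le\tfrac14(d(p,x)+c)^2$, so the level sets $\Sigma_t=\{f=t\}$ are compact for $t$ large, the sublevel sets exhaust $M$, and the integral curves of $\nabla f$ run from $p$ out to infinity; since $\mathrm{Ric}>0$ we have $R>0$ everywhere, and $\mathrm{Ric}(\nabla f,\nabla f)=\tfrac12\langle\nabla R,\nabla f\rangle\ge0$ shows $R$ is nondecreasing along the outward $\nabla f$-flow.

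Fix a minimizing geodesic ray $\gamma:[0,\infty)\to M$ from $p$, unit speed. The soliton equation along $\gamma$ reads $(f\circ\gamma)''=\tfrac12-\mathrm{Ric}(\gamma',\gamma')$, and $(f\circ\gamma)'(0)=0$ since $p$ is critical. Plugging the cut-off test field $V=\psi(t)E_i(t)$, with $E_i$ a parallel orthonormal frame of $(\gamma')^{\perp}$ and $\psi$ linear near the endpoints and $\equiv1$ in between, into the second-variation inequality for the minimizing segment $\gamma|_{[0,s]}$ and summing over $i$ gives $\int_{1}^{s-1}\mathrm{Ric}(\gamma',\gamma')\,dt\le2(n-1)$ for all $s$; hence $\int_0^\infty\mathrm{Ric}(\gamma',\gamma')\,dt<\infty$. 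Combined with $\mathrm{Ric}(\gamma',\gamma')\ge0$ there is a sequence $s_k\to\infty$ with $s_k\,\mathrm{Ric}(\gamma'(s_k),\gamma'(s_k))\to0$; integrating the identity for $(f\circ\gamma)''$ gives $(f\circ\gamma)'(s)=\tfrac s2+O(1)$, so comparing with $|\nabla f|=\sqrt{f-R}\le\sqrt f$ and the quadratic growth of $f$ forces $\nabla f$ to become asymptotically tangent to $\gamma$ (the angle with $\gamma'$ tends to $0$) and $R(\gamma(s))=O(s)$.

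The pinching hypothesis enters at this point. At $x_k=\gamma(s_k)$ choose an orthonormal basis $e_1,\dots,e_{n-1}$ of $(\gamma')^{\perp}$, so $\mathrm{Ric}(\gamma',\gamma')=\sum_i K(\gamma',e_i)$. Nonnegative $2$nd-Ricci curvature says $K(\gamma',e_i)+K(\gamma',e_j)\ge0$ for $i\ne j$; averaging $K(\gamma',e_i)\ge-K(\gamma',e_j)$ over $j\ne i$ yields $K(\gamma',e_i)\ge-\tfrac1{n-3}\mathrm{Ric}(\gamma',\gamma')$ (with a matching upper bound), so \emph{every} radial sectional curvature $K(\gamma'(s_k),e_i)$ tends to $0$. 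Thus the only curvature positivity that Munteanu--Wang drew from $\sec\ge0$ --- decay of the radial block of the curvature operator along rays --- survives under $\mathrm{Ric}_2\ge0$ alone; and $\mathrm{Ric}_2\ge0$ also provides the Laplacian/Hessian comparison that governs the second fundamental form of the $\Sigma_t$ and the growth of Jacobi fields along $\gamma$.

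With these inputs one runs the Munteanu--Wang argument, following Chow--Lu--Yang: the soliton equation $\mathrm{Ric}+\mathrm{Hess}\,f=\tfrac12 g$ together with $\mathrm{Hess}\,f(\gamma',\gamma')\to\tfrac12$, the asymptotic radiality of $\nabla f$, and the vanishing of the radial curvature along $\gamma$ force the geometry at infinity to degenerate along the ray, producing a line, so the Cheeger--Gromoll splitting theorem (valid since $\mathrm{Ric}\ge0$) gives $M=N\times\mathbb R$ isometrically; then $\mathrm{Ric}$ has a null direction, contradicting $\mathrm{Ric}>0$. Hence $M$ is compact. (For $n=3$ one has $\mathrm{Ric}_2=\mathrm{Ric}$ and the statement is already covered by the classification of three-dimensional shrinkers.) I expect the real work to be in this last step: checking that every comparison-geometry and curvature-decay ingredient Munteanu--Wang extracted from $\sec\ge0$ genuinely persists under the weaker $\mathrm{Ric}_2\ge0$, and carrying this out with no a priori bound on $|\mathrm{Rm}|$, so that one cannot pass to a smooth blow-down limit and must argue directly on $M$; a secondary point is upgrading the ``along a subsequence'' statements to honest limits using the soliton identities rather than a curvature bound.
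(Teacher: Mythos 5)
There is a genuine gap. Your proposal never actually closes the contradiction: the final paragraph asserts that the asymptotic radiality of $\nabla f$ and the decay of the radial sectional curvatures along a ray ``force the geometry at infinity to degenerate along the ray, producing a line,'' and then invokes Cheeger--Gromoll. No mechanism is given for producing a line, and none is available: a complete noncompact manifold always carries rays, but a line requires a geodesic minimizing between every pair of its points on all of $\mathbb{R}$, and nothing in the soliton identities, the finiteness of $\int_0^\infty \mathrm{Ric}(\gamma',\gamma')\,dt$, or the vanishing of $K(\gamma'(s_k),e_i)$ along a subsequence produces one. (Indeed, if a line existed the conclusion would be immediate from $\mathrm{Ric}>0$, which is precisely why this cannot be the route.) You also misdescribe the Munteanu--Wang argument you intend to run: it is not a second-variation/splitting argument, and the ``comparison-geometry ingredients'' you list are not what their proof extracts from the curvature hypothesis.

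What the hypothesis $\mathrm{Ric}_{(2)}\ge 0$ is actually needed for --- and what is missing from your write-up --- is the purely algebraic fact that at each point, if $e_1$ is a unit eigenvector for the smallest Ricci eigenvalue $\lambda_1$ and $\{e_i\}$ diagonalizes $\mathrm{Ric}$, then $\sum_{i\ge 2}K_{1i}\lambda_i\ge 0$ (this requires a case analysis on which single $K_{1i}$ can be negative, regrouping the sectional curvatures in pairs). This gives $\Delta_f\lambda_1\le\lambda_1$ in the barrier sense; the maximum principle applied to $u=\lambda_1-af^{-1}-naf^{-2}$ then yields $\mathrm{Ric}\ge b f^{-1}$ globally; integrating $\nabla S=2\,\mathrm{Ric}(\nabla f)$ along the integral curves of $\nabla f$ gives $S\ge b\ln f\to\infty$, which contradicts the Cao--Zhou bound $\int_{B_p(r)}S\,dv\le\tfrac n2 V(B_p(r))$ together with $V(B_p(r))\ge c_1 r$. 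Your pointwise bound $K(\gamma',e_i)\ge-\tfrac1{n-3}\mathrm{Ric}(\gamma',\gamma')$ along a geodesic ray is correct but is not a substitute for this: it controls the wrong curvature block (radial, along a geodesic, rather than $R_{1i1j}R_{ij}$ at the minimal Ricci eigendirection) and feeds into an argument that does not terminate.
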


Recall that, the \emph{$k$th-Ricci curvature} on an $n$-dimensional
Riemannian manifold $(M,g)$, satisfies $Ric_{(k)}\ge H$ for some
constant $H$, at a point $x\in M$ if for all $(k+1)$-dimensional
subspaces $V\in T_xM$, the Riemannian curvature $Rm$ of $(M,g)$
satisfies
\[
\sum^{k+1}_{i=1}Rm(v,e_i,v,e_i)\ge H
\]
for all $v\in V$, where $\{e_1, ..., e_{k+1}\}$ is any orthonormal basis
for $V$. In particular, $Ric_{(2)}\geq 0$ if $Rm(e_1, e_2, e_1, e_2)+Rm(e_1, e_3, e_1, e_3)\geq 0$ for any three orthonormal vectors $\{e_1, e_2, e_3\}$.
Obviously, this definition requires $1\le k\le n-1$ otherwise
it is vacuous. Moreover, if $Ric_{(k)}\ge H$ at all points $x\in M$, we
say $(M,g)$ satisfies $k$th-Ricci curvature bounded below by $H$, i.e.,
$Ric_{(k)}(M)\ge H$. Clearly, $Ric_{(k)}(M)=Ric(M)$ if $k=n-1$, while
$Ric_{(k)}(M)$ is the sectional curvature $K_{sec}(M)$ if $k=1$.
If $1\le k_1\le k_2\le n-1$, then $Ric_{(k_1)}(M)\ge 0$
implies $Ric_{(k_2)}(M)\ge 0$. Hence $Ric_{(k)}(M)$ can be viewed
as the interpolation between the Ricci curvature and the sectional
curvature. We refer the readers to \cite{[Wh],[Sh]} and references
therein for geometry and topology of the $k$-th Ricci curvature.

Theorem \ref{thm1} implies that any $n$-dimensional shrinker with positive
$2$nd-Ricci curvature must be compact. Moreover, Theorem \ref{thm1} implies
the following classification.

\begin{corollary}\label{cor1}
Let $(M,g, f)$ be an $n$-dimensional shrinker with nonnegative $2$nd-Ricci
curvature. Then $(M,g,f)$ must be compact, or $\mathbb{R}^n$, or a quotient
of the product $\mathbb{R}^k\times N^{n-k}$ with $1\le k\le n-2$, where
$N^{n-k}$ is a compact shrinker of dimension $n-k$ with positive Ricci curvature.
\end{corollary}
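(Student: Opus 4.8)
The plan is to reduce the classification to Theorem~\ref{thm1} by means of the de Rham splitting of the universal cover. Since the $2$nd-Ricci condition is non-vacuous only when $2\le n-1$, the hypothesis $Ric_{(2)}(M)\ge 0$ forces $Ric(M)\ge 0$. If in fact $Ric>0$ everywhere on $M$, then Theorem~\ref{thm1} applies directly and $M$ is compact; so we may assume $Ric$ degenerates somewhere and pass to the universal cover $(\tilde M,\tilde g,\tilde f)$, which is again a shrinker with $Ric_{(2)}\ge 0$. Write the de Rham decomposition $\tilde M=\mathbb{R}^k\times N_1\times\cdots\times N_m$ with the $N_i$ complete, simply connected, irreducible and non-flat (a flat irreducible factor is $\mathbb{R}$ and gets absorbed into the Euclidean part). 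One checks that the soliton structure splits accordingly: $Ric$ is invariant under the restricted holonomy, hence block diagonal for this decomposition, hence so is $\mathrm{Hess}\,\tilde f=\tfrac12\tilde g-Ric$, and a function whose Hessian is block diagonal for a parallel product splitting is, up to a constant, a sum of functions pulled back from the factors. Thus $(\mathbb{R}^k,\mathrm{flat})$ is a Gaussian soliton and each $(N_i,g_i)$ carries a gradient shrinking Ricci soliton structure with $Ric_{N_i}\ge 0$.

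The next step is to show $Ric_{N_i}>0$ on all of $N_i$. Running the self-similar Ricci flow associated with the soliton on $N_i$, the condition $Ric\ge 0$ is preserved, and Hamilton's strong maximum principle shows that for positive time the kernel of the Ricci operator is a parallel distribution; this produces a local, hence global, splitting off a flat factor unless either $Ric_{N_i}$ is everywhere positive or $Ric_{N_i}\equiv 0$. Irreducibility excludes the first splitting, while a shrinker with $Ric\equiv 0$ has $\mathrm{Hess}\,f=\tfrac12 g$ and is therefore flat $\mathbb{R}^n$ by Tashiro-type rigidity, contradicting that $N_i$ is irreducible and non-flat. Hence $Ric_{N_i}>0$. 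Now apply Theorem~\ref{thm1} to $N_i$: if $\dim N_i\ge 3$ then $Ric_{(2)}^{N_i}\ge 0$, since a $3$-plane tangent to $N_i$ is a $3$-plane in $T\tilde M$ on which the two curvature tensors agree, so $N_i$ is compact; $\dim N_i=2$ together with $Ric>0$ forces $N_i$ to be a round $S^2$ (compact); and $\dim N_i=1$ is impossible, a $1$-manifold being flat. Consequently $N:=N_1\times\cdots\times N_m$ is a compact shrinker, and being a product of positive-Ricci manifolds it satisfies $Ric_N>0$; moreover each $\dim N_i\ge 2$, so $\dim N=n-k\ge 2$ whenever $N$ is nontrivial.

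It remains to read off the trichotomy from $\tilde M=\mathbb{R}^k\times N$. If $k=n$, so that $N$ is a point, then $\tilde M=\mathbb{R}^n$; the deck group preserves $\tilde f$, hence fixes its minimum and lies in $O(n)$, and a finite group acting freely on the contractible space $\mathbb{R}^n$ must be trivial, so $M=\mathbb{R}^n$. If $k=0$, then $M$ is covered by the compact manifold $N$ and is therefore compact. If $1\le k\le n-1$, then $N$ is nontrivial, hence $\dim N\ge 2$, i.e. $1\le k\le n-2$, and $M=(\mathbb{R}^k\times N)/\pi_1(M)$ is a quotient of the stated product. This exhausts the three cases. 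I expect the main obstacle to be the strong-maximum-principle step that identifies each irreducible non-flat de Rham factor as a \emph{positive}-Ricci shrinker (so that Theorem~\ref{thm1} becomes applicable), together with the low-dimensional bookkeeping and the contractibility argument ruling out nontrivial quotients in the purely Euclidean case.
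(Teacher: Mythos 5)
Your architecture (de Rham splitting of the universal cover, splitting of the soliton structure across the factors, positivity of Ricci on each irreducible non-flat factor, Theorem \ref{thm1} applied factorwise, then the case analysis on $k$) is genuinely different from the paper's, which simply invokes Corollary 4 of Petersen--Wylie \cite{[PW]}: that result already delivers $\widetilde M=\mathbb{R}^k\times N$ with $Ric_N>0$ once one knows $Rm(u,e_k,u,e_l)Ric(e_k,e_l)\ge 0$ for all $u$ (Claim \ref{cla} plus Remark \ref{addcla}) and $|Ric|\in L^2(e^{-f}dv)$ (which is \eqref{Ricfini}). What you do by hand is the parabolic analogue of their elliptic argument, and that is exactly where the one genuine gap sits.

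The gap: you justify ``the kernel of the Ricci operator is parallel for positive time'' by saying that $Ric\ge 0$ is preserved and citing Hamilton's strong maximum principle. Preservation is trivial here (the flow is self-similar), but it is not what the strong maximum principle needs. The Ricci tensor evolves by $\partial_t R_{ij}=\Delta R_{ij}+2R_{ikjl}R_{kl}-2R_{ik}R_{kj}$, and the parallel-kernel conclusion requires the null-vector condition $2R_{ikjl}R_{kl}v^iv^j\ge 0$ whenever $Ric(v,\cdot)=0$; for a general shrinker with $Ric\ge 0$ in dimension at least $4$ this fails, and with it your step. It holds here precisely because $Ric_{(2)}\ge 0$ makes the reaction term nonnegative on the minimal eigendirection --- that is the content of Claim \ref{cla} --- but you never invoke it at this point. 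Moreover you run the maximum principle on each factor $N_i$ \emph{before} establishing $Ric_{(2)}^{N_i}\ge 0$ (your three-plane restriction argument comes only afterwards), so the null-vector condition on $N_i$ is not yet available when you use it. The fix is to reorder: first get $Ric_{(2)}^{N_i}\ge 0$ for $\dim N_i\ge 3$ (treating $\dim N_i=2$ separately, as you do), then cite Claim \ref{cla} to verify the null-vector condition, then apply the strong maximum principle --- or, more economically, replace the parabolic step by the elliptic one, since $\Delta_f\lambda_1\le\lambda_1$ in the barrier sense together with \eqref{Ricfini} is exactly the input to Petersen--Wylie's Corollary 4. The remaining bookkeeping (Tashiro rigidity for $Ric\equiv 0$ factors, the $S^2$ case, triviality of the deck group when $\widetilde M=\mathbb{R}^n$, and $\dim N\ge 2$ forcing $k\le n-2$) is correct and usefully more explicit than the paper's one-line conclusion.
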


The proof of Theorem \ref{thm1} follows the argument of Munteanu-Wang
\cite{[MW]} and the main difference is that we need to carefully analyze
the nonnegativity of curvature term $Rm*Ric$ in the $f$-Laplacian
equation of Ricci curvature (see \eqref{evo} below). The idea of the proof is
as follows. We first observe that $Ric_{2}(M)\ge0$ implies $Rm*Ric\ge 0$
in \eqref{evo}. Assuming that $(M,g,f)$ is noncompact, then we can apply
$Rm*Ric\ge 0$ to show that the scalar curvature is unbounded at infinity,
which contradicts a fact that the average of scalar curvature in shrinkers
is bounded.

At present, we do not know whether $Rm*Ric\ge 0$ holds when
$Ric_{k}(M)\ge0$ for some $3\le k\le n-1$. However if we impose certain eigenvalues pinching of Ricci curvature, we still have
$Rm*Ric\ge 0$ and hence the shrinkers are compact, even for
$k=n-1$ (e.g., Theorem \ref{Sect1} in Section \ref{2thRic}). Besides,
let $\lambda_1\le \lambda_2\le\ldots\le\lambda_n$ denote eigenvalues of
Ricci curvature in $(M,g,f)$. We have that
\begin{theorem}\label{thm2}
Let $(M,g,f)$ be an $n$-dimensional shrinker with positive Ricci curvature.
Suppose that the scalar curvature is bounded when $n=4$ or the sectional
curvature is bounded when $n\ge 5$. If there exists a positive constant
$A$ such that
\begin{equation}\label{pinmis}
\lambda_n-\lambda_2\le A \lambda_1,
\end{equation}
then $(M,g, f)$ is compact.
\end{theorem}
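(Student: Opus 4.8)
The plan is to run the same scheme as for Theorem \ref{thm1} (ultimately that of Munteanu--Wang \cite{[MW]}): the new point is that the nonnegativity of the curvature term $R_{ikjl}R_{kl}$ in the weighted Laplacian equation \eqref{evo} for the Ricci tensor,
\[
\Delta_f R_{ij}=R_{ij}-2R_{ikjl}R_{kl},\qquad \Delta_f:=\Delta-\nabla f\cdot\nabla ,
\]
is to be extracted from the pinching \eqref{pinmis} together with the curvature bound, rather than from $Ric_{(2)}\ge0$. Throughout I would use the trace equation $\Delta_f R=R-2|Ric|^2$, the soliton identities $R+|\nabla f|^2=f$, $\Delta_f f=\tfrac n2-f$, $\nabla R=2Ric(\nabla f)$, and the Cao--Zhou estimates $\tfrac14(r(x)-c)^2\le f(x)\le\tfrac14(r(x)+c)^2$, with $r(x)$ the distance to a minimum point $p$ of $f$. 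Assume, for contradiction, that $(M,g,f)$ is noncompact. First two reductions: $Ric>0$ makes the shrinker nonflat, so $R>0$ on $M$ by Chow--Lu--Yang \cite{[CLY]}; and the curvature hypothesis controls the full curvature tensor --- directly when $n\ge5$, since a bound on $K_{sec}$ bounds $|Rm|$ and hence $R$, and when $n=4$ through the structure theory of four-dimensional shrinkers, which turns a scalar curvature bound into effective control of $Rm$. In particular one has a uniform bound $0<\lambda_i\le C$ on all Ricci eigenvalues. Finally, integrating $\Delta f=\tfrac n2-R\le\tfrac n2$ over $B(p,r)$ and using that $\nabla f$ is outward-pointing along $\partial B(p,r)$ for large $r$ gives $\int_{B(p,r)}R\,dV\le\tfrac n2\mathrm{Vol}(B(p,r))$, so with the volume growth $\mathrm{Vol}(B(p,r))\le Cr^n$ the mean value of $R$ over $B(p,r)$ stays bounded.

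The crucial step, and the one I expect to be the main obstacle, is to prove $R_{ikjl}R_{kl}\ge0$ outside a compact set in the sense needed for the maximum principle, in particular when contracted with a unit eigenvector of $Ric$ realizing the least eigenvalue $\lambda_1$. Working in an orthonormal eigenbasis $\{e_i\}$ of $Ric$ with $Ric(e_i,e_i)=\lambda_i$, the value at $v=e_1$ is $\sum_k\lambda_kK_{1k}$ where $K_{1k}=R_{1k1k}$ and $\sum_kK_{1k}=\lambda_1$; writing $K_{1k}=K_{1k}^+-K_{1k}^-$ and using $\lambda_1\le\lambda_k\le\lambda_n$ gives
\[
\textstyle\sum_k\lambda_kK_{1k}\ \ge\ \lambda_1^2-(\lambda_n-\lambda_1)\sum_kK_{1k}^- .
\]
The hypotheses are tailored to dominate the defect $(\lambda_n-\lambda_1)\sum_kK_{1k}^-$ by $\lambda_1^2$, or by $o(1)$ at infinity: \eqref{pinmis} keeps the spread of the top $n-1$ Ricci eigenvalues controlled by $\lambda_1$, while the curvature bound controls $\sum_kK_{1k}^-$. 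I anticipate a dichotomy at infinity. If $\lambda_1$ is bounded below by a positive constant off a compact set, then \eqref{pinmis} bounds $\lambda_n-\lambda_1$ and the curvature bound bounds $\sum_kK_{1k}^-$, and the estimate closes. If instead $\lambda_1\to0$ along points going to infinity, then \eqref{pinmis} forces $\lambda_2,\dots,\lambda_n$ to a common value, so with bounded curvature $(M,g)$ is an approximate product $\mathbb R\times N^{n-1}$ near those points, and the mixed sectional curvatures $K_{1k}$ --- planes containing the degenerating direction --- tend to $0$, so again $\sum_k\lambda_kK_{1k}\ge-o(1)$ there. One then checks, by the same eigenvalue bookkeeping, positivity of the remaining contractions of $R_{ikjl}R_{kl}$ that enter \eqref{evo}. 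Making the second alternative rigorous --- producing the near-splitting and quantifying $K_{1k}\to0$ from $\lambda_1\to0$, \eqref{pinmis} and the curvature bound --- is the delicate part.

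Granting $R_{ikjl}R_{kl}\ge0$ far from $p$, \eqref{evo} yields $\Delta_f Ric\le Ric$ there (in the appropriate sense), and one runs the contradiction argument of Theorem \ref{thm1}/\cite{[MW]}. Along the flow of $\nabla f$ the identity $\nabla R=2Ric(\nabla f)$ gives $\tfrac{d}{d\tau}R=2Ric(\nabla f,\nabla f)\ge2\lambda_1|\nabla f|^2>0$, so $R$ is increasing; and $\tfrac{d}{d\tau}f=|\nabla f|^2=f-R$, so, starting the flow far out, if $R$ remained bounded then $f$ (hence $|\nabla f|^2$) would grow at least exponentially in the flow parameter. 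Combining this with the favorable sign of the curvature term in \eqref{evo} and the soliton structure forces $\tfrac{d}{d\tau}R$ to be non-integrable, so $R$ is unbounded at infinity; since $R$ is monotone along the trajectories of $\nabla f$, which foliate an end of $M$, it is in fact large on an entire end, contradicting the bounded mean value of $R$. Therefore $(M,g,f)$ is compact.
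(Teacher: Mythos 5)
Your proposal hinges on extracting nonnegativity (or near-nonnegativity) of the curvature term $R_{1k1l}R_{kl}=\sum_k\lambda_kK_{1k}$ in \eqref{evo} from the pinching \eqref{pinmis} plus the curvature bound, and this is where the argument breaks. Your own inequality $\sum_k\lambda_kK_{1k}\ge\lambda_1^2-(\lambda_n-\lambda_1)\sum_kK_{1k}^-$ makes the difficulty visible: the pinching controls $\lambda_n-\lambda_2$ by $A\lambda_1$, but says nothing about $\lambda_2-\lambda_1$, and at infinity one has $\lambda_1\to0$ while $\lambda_2\to\tfrac12$, so $\lambda_n-\lambda_1$ stays bounded away from zero. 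You would therefore need $\sum_kK_{1k}^-=O(\lambda_1^2)$, a quantitative decay of the negative parts of the mixed sectional curvatures that neither \eqref{pinmis} (a condition on Ricci eigenvalues only) nor a mere upper bound on $|Rm|$ can supply. The fallback ``$\sum_k\lambda_kK_{1k}\ge-o(1)$ from the asymptotic splitting'' does not rescue the scheme either: the Munteanu--Wang barrier $u=\lambda_1-af^{-1}-naf^{-2}$ requires $\Delta_f\lambda_1\le\lambda_1$ exactly, or with an error decaying like $f^{-2}$ together with a matching lower bound on $\lambda_1$ (this is precisely the content of Theorem \ref{asynoRic} in Section \ref{anc}); an unquantified $o(1)$ error is useless because $\lambda_1$ itself tends to zero, and Naber's convergence is only subsequential along integral curves, so even the qualitative $K_{1k}\to0$ is not uniform over the end.

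The paper's proof sidesteps the term $Rm*Ric$ entirely and works with the \emph{scalar} curvature equation $\Delta_fS=S-2|Ric|^2$, which is a purely algebraic expression in the Ricci eigenvalues --- exactly the objects the pinching controls. Concretely: Naber's structure theorem gives $\lambda_1\to0$ at infinity and an asymptotic cylinder $\mathbb{R}\times N^{n-1}$ with $\lambda_i\to\tfrac12$ ($i\ge2$); monotonicity of $S$ along $\nabla f$ gives $S\le\tfrac{n-1}{2}$ and $\lambda_2\le\tfrac12$ outside a compact set, whence \eqref{pinmis} yields $\lambda_i-\tfrac12\le A\lambda_1$ for all $i\ge2$; the decomposition $\tfrac12\Delta_fS=-\sum_{i\ge2}(\tfrac12-\lambda_i)^2+\tfrac12\sum_{i\ge2}(\tfrac12-\lambda_i)+(\tfrac12-\lambda_1)\lambda_1$ then gives $\Delta_fS>0$ outside a compact set; and integrating this against $e^{-f}$ over $M\setminus D(a)$ produces a boundary term $-2\int_{\Sigma(a)}Ric(\nabla f,\tfrac{\nabla f}{|\nabla f|})e^{-f}<0$, a contradiction. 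You should redirect your argument to this equation; as written, the central step of your proposal cannot be completed from the stated hypotheses.
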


Similarly,  Theorem \ref{thm2} implies the following classification.
\begin{corollary}\label{cor1b}
Let $(M, g, f)$ be an $n$-dimensional shrinker with nonnegative Ricci
curvature. Suppose that the scalar curvature is bounded when $n=4$ or
the sectional curvature is bounded when $n\ge 5$. If \eqref{pinmis} holds,
then $(M,g,f)$ must be compact, or $\mathbb{R}^n$, or a quotient of the
product $\mathbb{R}\times N^{n-1}$ , where $N^{n-1}$ is a compact
shrinker of dimension $n-1$ with $Ric(g_N)=\frac 12g_N$.
\end{corollary}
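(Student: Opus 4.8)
The plan is to combine Theorem~\ref{thm2} with the standard splitting theory for shrinkers of nonnegative Ricci curvature. First, if $Ric>0$ everywhere on $M$, then $(M,g,f)$ satisfies all the hypotheses of Theorem~\ref{thm2} and is therefore compact, so from now on I assume $Ric\ge 0$ but $Ric$ fails to be positive definite at some point. Passing to the universal cover $(\widetilde M,\widetilde g,\widetilde f)$ --- which changes neither the shrinker equation, nor the curvature bounds, nor the Ricci eigenvalues and hence the pinching \eqref{pinmis} --- I would invoke the de Rham type splitting for shrinkers with $Ric\ge 0$ (as in the proof of Corollary~\ref{cor1}) to write $(\widetilde M,\widetilde g,\widetilde f)$ isometrically as a Riemannian product $\mathbb{R}^{k}\times N^{n-k}$, $1\le k\le n$, where the $\mathbb{R}^{k}$ factor carries the Gaussian potential $\tfrac14|x|^2$ and $N^{n-k}$ is a complete shrinker with $Ric_N>0$ (with $k=n$ meaning $\widetilde M=\mathbb{R}^n$); here $k\ge 1$ precisely because $Ric$ degenerates somewhere.

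The next step is to use \eqref{pinmis} to force $k\le 1$. At any point of $\widetilde M=\mathbb{R}^{k}\times N^{n-k}$ the eigenvalues of $Ric_{\widetilde g}$ consist of $k$ zeros together with the strictly positive eigenvalues of $Ric_N$. If $k\ge 2$ then $\lambda_1=\lambda_2=0$, so \eqref{pinmis} gives $\lambda_n\le A\lambda_1=0$ and hence $Ric_{\widetilde g}\equiv 0$; this contradicts $Ric_N>0$ unless the $N$ factor is trivial, i.e.\ $k=n$ and $\widetilde M=\mathbb{R}^n$. A complete flat shrinker must be $\mathbb{R}^n$ itself: the potential lifts to $\widetilde f=\tfrac14|x-x_0|^2+\mathrm{const}$ on $\mathbb{R}^n$, and invariance of $\widetilde f$ under the deck group $\Gamma$ forces every element of $\Gamma$ to fix $x_0$, so $\Gamma$ is trivial since deck transformations act freely. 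Thus, apart from the cases $M$ compact and $M=\mathbb{R}^n$, we are left with $k=1$, that is $\widetilde M=\mathbb{R}\times N^{n-1}$.

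In this last case $\lambda_1=0$, so \eqref{pinmis} yields $\lambda_2=\lambda_n$; since the least eigenvalue of $Ric_N$ is strictly positive, this says exactly that $Ric_N=\mu\,g_N$ pointwise for a positive function $\mu$ on $N$. By Schur's lemma $\mu$ is constant when $\dim N\ge 3$ (the cases $\dim N\le 2$ being settled by the classification of two-dimensional shrinkers, which already gives the round metric with $Ric_N=\tfrac12 g_N$); then $Ric_N=\mu g_N>0$ and Myers' theorem make $N$ compact, and tracing the shrinker equation $Ric_N+\mathrm{Hess}\,f_N=\tfrac12 g_N$ over the closed manifold $N$ gives $\int_N S_N=\tfrac{n-1}{2}\,\mathrm{vol}(N)$, hence $\mu=\tfrac12$ and $f_N$ constant. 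Therefore $\widetilde M=\mathbb{R}\times N^{n-1}$ with $N^{n-1}$ compact and $Ric(g_N)=\tfrac12 g_N$, and $M$ is a quotient of this product, which is the asserted trichotomy.

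I expect the genuine difficulty to be concentrated in the first paragraph, namely justifying (or locating a clean reference for) the de Rham type splitting: that a shrinker with $Ric\ge 0$ which is not everywhere Ricci positive is, after passing to the universal cover, a metric product $\mathbb{R}^{k}\times N$ with $Ric_N>0$ and the Gaussian structure on the flat factor. This rests on a strong maximum principle for the Ricci tensor along the associated self-similar Ricci flow, the null distribution of $Ric$ being parallel and integrating to totally geodesic flat leaves. Once this structural input is granted, the remainder --- eliminating $k\ge 2$, identifying the flat case with $\mathbb{R}^n$, and pinning down $N$ via Schur and Myers --- is routine.
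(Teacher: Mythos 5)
Your second and third paragraphs (eliminating $k\ge2$, identifying the flat case with $\mathbb{R}^n$, pinning down $N$ via Schur and Myers) are fine, but the structural input you flag in the first paragraph is a genuine gap, not a bookkeeping issue, and your proposal never closes it. A shrinker with $Ric\ge 0$ that degenerates at a point does \emph{not}, in general, satisfy the hypotheses of a de Rham splitting: one first needs to know that $\lambda_1\equiv 0$ (degeneracy at one point propagates everywhere) and that the null distribution of $Ric$ is parallel, and both of these require a null-eigenvector condition on the reaction term $R_{ikjl}R_{kl}$ in \eqref{evo}, which is exactly what is \emph{not} available under $Ric\ge0$ alone in dimension $\ge 4$. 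In Corollary \ref{cor1} this input is supplied by $Ric_{(2)}\ge0$ via Claim \ref{cla}; here you have no such hypothesis, so "invoke the de Rham type splitting as in the proof of Corollary \ref{cor1}" does not go through. A symptom of the gap is that your argument never uses the bounded curvature hypothesis except through Theorem \ref{thm2}.

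The missing ingredient is that the pinching \eqref{pinmis} itself, combined with the curvature bound, produces the needed differential inequality. Writing $-2\sum_{i\ge2}K_{1i}\lambda_i=-2\lambda_1\lambda_2-2\sum_{i\ge2}K_{1i}(\lambda_i-\lambda_2)$ and using $|K_{1i}|\le C$ together with $0\le\lambda_i-\lambda_2\le\lambda_n-\lambda_2\le A\lambda_1$, one gets from \eqref{evoRic}
\begin{equation*}
\Delta_f\lambda_1\ \le\ \lambda_1-2\lambda_1\lambda_2+CA\lambda_1\ \le\ C'\lambda_1
\end{equation*}
in the barrier sense. The strong maximum principle applied to the nonnegative function $\lambda_1$ then gives the dichotomy $\lambda_1>0$ everywhere (whence compactness by Theorem \ref{thm2}) or $\lambda_1\equiv0$; in the latter case \eqref{pinmis} forces $\lambda_2=\cdots=\lambda_n$, and the Petersen--Wylie rigidity/splitting results yield either $\mathbb{R}^n$ or $\mathbb{R}\times N^{n-1}$ with $N$ Einstein of constant $\tfrac12$. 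This is the route the paper takes; your endgame then applies verbatim, but without this step your proof does not start.
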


In particular, for dimension 4, we get the same compact conclusion
under different curvature pinching conditions.
\begin{theorem}\label{thmpicn}
Let $(M, g, f)$ be a nonflat $4$-dimensional shrinker with positive
Ricci curvature and bounded scalar curvature. For any $x\in M$, if there
exists a positive constant $A$ such that
\begin{equation}\label{picncon}
|Rm(u,v,u,v)|\le A\cdot Ric(u, u)
\end{equation}
for any orthonormal vectors $u$ and $v$ in $T_xM$, then $(M, g, f)$ is compact.
\end{theorem}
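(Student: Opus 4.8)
The plan is to argue by contradiction, following the scheme of Munteanu--Wang \cite{[MW]} that underlies Theorem \ref{thm1}: assuming $(M,g,f)$ is noncompact, one shows the scalar curvature is unbounded at infinity, contradicting both the hypothesis $R\le R_0$ and the known fact that the weighted average of the scalar curvature on a shrinker is finite. The single new ingredient is a lower bound for the curvature term $Rm*Ric$ in the weighted Bochner equation \eqref{evo} for the Ricci tensor, which we extract from the two-sided pinching \eqref{picncon} together with the bounded scalar curvature.

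First I would record a simplification special to $n=4$: if $R\le R_0$, then \eqref{picncon} gives $|Rm(u,v,u,v)|\le AR_0$ for all orthonormal $u,v$, so every sectional curvature is bounded and hence $|Rm|$ is bounded. Thus we may work in the bounded-curvature regime, where the structural results for shrinkers are available (this plays the role of the ``bounded sectional curvature'' hypothesis of Theorem \ref{thm2} for $n\ge5$). Next, fix $x\in M$ and diagonalize $Ric$ at $x$, with eigenvalues $0<\lambda_1\le\lambda_2\le\lambda_3\le\lambda_4$ and orthonormal eigenvectors $e_1,\dots,e_4$. Contracting \eqref{evo} with $e_1$ and applying the maximum principle for the smallest Ricci eigenvalue $\lambda_1$ (in the barrier sense, $\lambda_1$ being Lipschitz) gives, at $x$,
\[
\Delta_f\lambda_1\ \le\ \lambda_1-2Q_1,\qquad
Q_1:=\sum_{k=2}^{4}Rm(e_1,e_k,e_1,e_k)\,\lambda_k .
\]

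The core computation is a lower bound for $Q_1$. Since $\sum_{k=2}^{4}Rm(e_1,e_k,e_1,e_k)=Ric(e_1,e_1)=\lambda_1$, one can write $Q_1=\sum_{k\ge2}Rm(e_1,e_k,e_1,e_k)(\lambda_k-\lambda_1)+\lambda_1^{2}$; using $|Rm(e_1,e_k,e_1,e_k)|\le A\lambda_1$ from \eqref{picncon} and $0\le\lambda_k-\lambda_1\le R\le R_0$, this yields $Q_1\ge\lambda_1^{2}-AR_0\lambda_1$, so the bad curvature term $Rm*Ric$ in \eqref{evo} is controlled from below. Substituting back gives the differential inequality
\[
\Delta_f\lambda_1\ \le\ \lambda_1\bigl(1+2AR_0-2\lambda_1\bigr).
\]
One then runs the near-infinity analysis of \cite{[MW]}: using that $f$ is a proper exhaustion with $f\sim r^{2}/4$, the soliton identities $R+|\nabla f|^{2}=f$ and $\Delta_f f=\tfrac{n}{2}-f$, the finiteness of $\int_M e^{-f}\,dV$ (hence parabolicity of the weighted Laplacian), and the bounded-curvature geometry of the ends, one concludes that if $M$ were noncompact then $\lambda_1$, and therefore $R$, would be unbounded at infinity --- contradicting $R\le R_0$. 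Since $M$ is nonflat (in particular $Ric>0$), the noncompact case is impossible and $(M,g,f)$ is compact.

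The main obstacle is the $Q_1$-estimate together with propagating it to infinity. Unlike the one-sided hypothesis $Ric_{(2)}\ge0$ of Theorem \ref{thm1}, which yields $Rm*Ric\ge0$ outright, the pinching \eqref{picncon} is two-sided and only gives $Rm*Ric\ge-AR_0\lambda_1$, so the bounded scalar curvature must be used in an essential way to absorb the negative part of the curvature term; this is precisely why the statement is restricted to $n=4$, where $R$ bounded already controls all of $Rm$. Verifying that the weakened inequality $\Delta_f\lambda_1\le\lambda_1(1+2AR_0-2\lambda_1)$ still drives the Munteanu--Wang contradiction near infinity is the delicate step.
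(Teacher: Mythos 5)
Your reduction to the bounded-curvature regime and the algebraic estimate $Q_1\ge\lambda_1^2-AR_0\lambda_1$ are both correct, but the resulting differential inequality
\[
\Delta_f\lambda_1\ \le\ (1+2AR_0)\,\lambda_1
\]
is genuinely too weak to drive the Munteanu--Wang contradiction, and this is not a technicality that further care repairs. The mechanism in \cite{[MW]} is: from $\Delta_f\lambda_1\le\lambda_1$ and the barrier $\Delta_f(f^{-1})\ge f^{-1}-\tfrac n2 f^{-2}$ one gets $Ric\ge b f^{-1}$, whence along integral curves of $\nabla f$ one has $\tfrac{dS}{df}=\tfrac{2Ric(\nabla f,\nabla f)}{|\nabla f|^2}\ge 2bf^{-1}$ and $S\ge b\ln f\to\infty$. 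If the coefficient is $C:=1+2AR_0>1$, the best matching barrier is $f^{-C}$ (since $\Delta_f(f^{-C})\ge Cf^{-C}-\tfrac{nC}{2}f^{-C-1}$), so the maximum principle only yields $\lambda_1\ge b f^{-C}$ and then $\tfrac{dS}{df}\ge 2bf^{-C}$ integrates to a \emph{finite} quantity because $\int^\infty s^{-C}\,ds<\infty$ for $C>1$. The scalar curvature need not blow up and no contradiction appears. The quadratic good term $-2\lambda_1^2$ cannot rescue this, since $\lambda_1\to0$ at infinity on a noncompact shrinker with bounded curvature, so it is dominated by $2AR_0\lambda_1$ exactly where the argument must close. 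The loss occurs in your expansion around $\lambda_1$: the bound $0\le\lambda_k-\lambda_1\le R_0$ is far too crude.

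The paper avoids this by first invoking Naber's structure theorem (Theorem \ref{infi}, after using Theorem \ref{RmdyS} to get bounded $Rm$) to identify the asymptotic model $\mathbb{R}\times N^3$ and then splitting into cases. When $N^3=\mathbb{S}^3/\Gamma$, the end is smoothly asymptotic to the round cylinder by \cite{[MW19]}, so $\lambda_2,\lambda_3,\lambda_4\to\tfrac12$; expanding $\lambda_k=\tfrac12+\varepsilon_k$ with $\varepsilon_k$ small gives $2\sum_k K_{1k}\lambda_k=\lambda_1+2\sum_kK_{1k}\varepsilon_k\ge\lambda_1(1-2A\sum_k|\varepsilon_k|)>0$, recovering the clean inequality $\Delta_f\lambda_1\le\lambda_1$ with coefficient exactly $1$. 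When $N^3=(\mathbb{R}\times\mathbb{S}^2)/\Gamma$ this expansion fails ($\lambda_2\to0$), and the paper switches to an entirely different argument: the pinching gives $\lambda_4-\lambda_3\le 2A\lambda_2$, which forces $\Delta_f S>0$ outside a compact set, contradicting $\int_{M\setminus D(a)}\Delta_f S\,e^{-f}dv=-2\int_{\Sigma(a)}Ric(\nabla f,\tfrac{\nabla f}{|\nabla f|})e^{-f}<0$. Your proposal sees neither the need for the sharp asymptotic eigenvalue information nor the second case, so as written it does not yield a proof.
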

\begin{remark} Recently, Li-Wang \cite{[LW2023]} proved that any K\"ahler
shrinker surface must have bounded curvature. However, it is not known in
real four dimensional case.
\end{remark}

As an application of Theorem \ref{thmpicn}, we have

\begin{corollary}\label{cor2}
Let $(M, g, f)$ be a $4$-dimensional shrinker with bounded scalar curvature.
If \eqref{picncon} holds, then $(M,g,f)$ must be compact, or $\mathbb{R}^4$,
or a quotient of the product $\mathbb{R}^k\times\mathbb{S}^{4-k}$ with
$1\le k\le 2$, where $\mathbb{S}^{4-k}$ is the standard sphere of dimension
$4-k$ with $Ric(g_{\mathbb{S}^{4-k}})=\frac 12 g_{\mathbb{S}^{4-k}}$.
\end{corollary}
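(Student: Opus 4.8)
The plan is to convert the pointwise pinching \eqref{picncon} into a curvature sign, then split off a flat factor via a maximum-principle argument, and finally either invoke Theorem~\ref{thmpicn} on the remaining piece or recognize that piece from the low-dimensional classification of shrinkers. First I would check that \eqref{picncon} forces $Ric\ge 0$ on $M$: fixing $x\in M$ and a unit vector $u\in T_xM$, complete it to an orthonormal basis $\{u,e_2,e_3,e_4\}$ and write $Ric(u,u)=\sum_{i=2}^{4}Rm(u,e_i,u,e_i)$; by \eqref{picncon} each summand is at least $-A\,Ric(u,u)$, so $(1+3A)Ric(u,u)\ge 0$, i.e. $Ric(u,u)\ge 0$. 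Thus $(M,g,f)$ is a $4$-dimensional shrinker with bounded scalar curvature, nonnegative Ricci curvature, and the pinching \eqref{picncon}.

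Next I would argue by dichotomy. If $Ric>0$ everywhere on $M$, then $M$ is nonflat and Theorem~\ref{thmpicn} applies directly, so $M$ is compact and we are done. If instead $Ric$ has a nontrivial kernel at some point, I would apply the strong maximum principle to the Ricci tensor along the self-similar Ricci flow generated by the soliton (equivalently, quote the known structure of shrinkers with $Ric\ge 0$; cf. \cite{[MW]}) to conclude that the kernel of $Ric$ is a parallel distribution. Passing to the universal cover $\widetilde M$ and using the de Rham decomposition, which is compatible with the soliton structure (the lifted potential splits over the factors as $\tfrac14|x|^2+f_N$), I obtain an isometric splitting $\widetilde M=\mathbb{R}^k\times N^{4-k}$ with $1\le k\le 4$, in which $(N^{4-k},g_N,f_N)$ is again a shrinker; both \eqref{picncon} and the scalar curvature bound descend to $N$, and maximality of the flat factor forces $Ric(g_N)>0$ when $k\le 2$, while $k=3$ cannot occur since a $1$-dimensional factor is flat and would have been absorbed.

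Then I would run through the remaining values of $k$. If $k=4$, then $\widetilde M$, hence $M$, is flat, so $M=\mathbb{R}^4$. If $k=2$, then $N^2$ is a complete $2$-dimensional shrinker with positive curvature, hence the round $S^2$ by Hamilton's classification \cite{[Ham]}; if $k=1$, then $N^3$ is a complete $3$-dimensional shrinker with $Ric>0$, hence the round $S^3$ by the classification of three-dimensional shrinkers \cite{[Pe2],[NW],[CCZ]}. In both cases, restricting the soliton equation to the compact Einstein factor gives $\mathrm{Hess}\,f_N=(\tfrac12-c)\,g_N$, where $c$ is the Einstein constant of $g_N$, and integrating over $N$ forces $c=\tfrac12$; hence $Ric(g_{\mathbb{S}^{4-k}})=\tfrac12 g_{\mathbb{S}^{4-k}}$ and $f_N$ is constant, so $M$ is a quotient of $\mathbb{R}^k\times\mathbb{S}^{4-k}$ with $1\le k\le 2$. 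Collecting the three outcomes — compact, or $\mathbb{R}^4$, or a quotient of $\mathbb{R}^k\times\mathbb{S}^{4-k}$ — proves the corollary.

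The one genuinely delicate point is the splitting in the second paragraph: one has to argue that a degenerate direction of $Ric$ in a shrinker with $Ric\ge 0$ really produces a \emph{parallel} kernel together with a soliton-compatible de Rham factorization, which is where the maximum-principle argument for the soliton's associated flow must be quoted with care. Once that is in hand the rest is routine: the implication \eqref{picncon}$\Rightarrow Ric\ge 0$ is elementary, in dimensions $\le 3$ a nonflat factor automatically has positive Ricci curvature, and the remaining conclusions are direct appeals to Theorem~\ref{thmpicn} and to the two- and three-dimensional classifications of shrinkers.
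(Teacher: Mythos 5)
Your proposal is correct and follows essentially the same route as the paper: the dichotomy $\lambda_1>0$ versus $\lambda_1\equiv 0$ via the strong maximum principle, Theorem~\ref{thmpicn} in the first case, and Petersen--Wylie plus de Rham splitting together with the low-dimensional classification of shrinkers in the second (the paper splits off a single $\mathbb{R}$ and quotes the full $3$-dimensional classification, which subsumes your case-by-case treatment of $k$). One caution on the ``delicate point'' you flag: the parallelism of $\ker Ric$ does \emph{not} follow from a general structure theorem for shrinkers with $Ric\ge 0$ in dimension $4$ --- no such theorem is available, since the reaction term $R_{ikjl}R_{kl}$ has no sign in general. It is precisely the pinching \eqref{picncon} that rescues the argument: as in the paper, \eqref{evoRic} and \eqref{picncon} give $\Delta_f\lambda_1\le C\lambda_1$ in the barrier sense, and this differential inequality is what licenses both the strong maximum principle dichotomy and the Petersen--Wylie splitting; your primary suggestion works once this is made explicit, but the parenthetical fallback would not.
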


\begin{remark}
The curvature pinching condition \eqref{picncon} was introduced by Qu-Wu
\cite{[QW]}, where it was named by the so-called ``condition A". There exist
many compact shrinkers satisfying condition \eqref{picncon} but sectional
curvature changing sign; see for example the compact K\"ahler shrinker on $\mathbb{C}\mathbb{P}^2\#(-\mathbb{C}\mathbb{P}^2)$ constructed by Cao
\cite{[C96]} and Koiso \cite{[Ko]}, independently. Since the nonnegative
sectional curvature or the nonnegative $2$nd-Ricci curvature obviously
implies the condition \eqref{picncon} (see also \cite{[QW]}), our
results may be suitable to a larger class than before. We would like
to point out that under more additional curvature assumptions, the same
conclusions of Theorem \ref{thmpicn} and Corollary \ref{cor2} were proved
in \cite{[QW]}.
\end{remark}

The proof of Theorems \ref{thm2} and \ref{thmpicn} not only adapts Munteanu-Wang's
argument \cite{[MW],[MW19]}, but also uses Naber's structure theorem \cite{[Na]}.
The idea of the proof  is as follows. We first assume for a contradiction that the shrinker
is noncompact. Then by Naber's structure theorem, there exists a sequence of pointed
manifolds  such that it smoothly converges to a limit manifold
along each integral curve of $\nabla f$. Finally we can exclude all possible limit
manifolds; thus the noncompact assumption is not true. Once we get the shrinkers are compact,
our classification results (Corollaries \ref{cor1} and \ref{cor2}) then follow by
Petersen-Wylie's maximum principle \cite{[PW]} and de Rham splitting theorem.
We remark that more compact theorems of shrinkers under different
eigenvalues pinching conditions  will be discussed in Sections \ref{pinchII} and \ref{anc}.

There are many related works concerning the compact property of shrinkers.
Wu-Zhang \cite{[WZ]} applied Munteanu-Wang's argument \cite{[MW]}
to reprove Ni's  theorem \cite{[Ni]}, which states that any K\"ahler
shrinker with positive bisectional curvature must be compact. In \cite{[Zh]},
Zhang used a similar method of \cite{[MW]} to generalize Ni's compact result
to the case of positive orthogonal bisectional curvature, which also
independently obtained by Li-Ni \cite{[LN]}. In \cite{[LW]},
Li-Wang proved that if the second eigenvalues of curvature operator is
positive, then the shrinker must be a quotient of the standard sphere.
In \cite{[GLX]}, Guan-Lu-Xu obtained a compact theorem for codimension
one shrinkers in $\mathbb{R}^{n+1}$ with the positive Ricci curvature.

The rest of this paper is organized as follows. In Section \ref{pre},
we recall some basic results and asymptotic properties at infinity for
shrinkers. In Section  \ref{2thRic}, we first analyze the nonnegativity
of a key algebraic curvature term on shrinkers with $Ric_{(2)}(M)\ge 0$.
Then we can prove Theorem \ref{thm1} and Corollary \ref{cor1} by adapting
Munteanu-Wang's proof strategy. We also provide another criterion for the
compact property of shrinkers (see Theorem \ref{Sect1}). In Section
\ref{pinchI}, we will prove Theorem \ref{thm2} by analyzing the
asymptotic structure of the shrinker. Then we apply Theorem \ref{thm2}
to prove Corollary \ref{cor1b}. In Section \ref{pinchII}, we will prove
Theorem \ref{thmpicn} and Corollary \ref{cor2}. Meanwhile, we will
apply different eigenvalues pinching conditions to discuss more compact results
for shrinkers. In Section \ref{anc}, we will study compact theorems
of shrinkers with certain asymptotically nonnegative curvature.

\section{Preliminaries}\label{pre}
In this section, we shall recall some basic facts about shrinkers, which
will be used in the proof of our theorems. First, tracing \eqref{Eq1}
gives
\begin{equation}\label{tra}
S+\Delta f=\frac n2,
\end{equation}
where $S$ is the scalar curvature of $(M,g,f)$. By Hamilton's computation
\cite{[Ham]}, by adding a constant to $f$ if necessary, from equations
\eqref{Eq1} and \eqref{tra}, one has
\begin{equation}\label{equat}
S+|\nabla f|^2=f
\end{equation}
and
\[
\nabla S=2Ric(\nabla f).
\]
Combining \eqref{tra} and \eqref{equat} implies that
\begin{equation}\label{feq}
\Delta_f f=\frac n2-f,
\end{equation}
where $\Delta_f:=\Delta-\nabla f\cdot\nabla$ is the weighted Laplacian.
Moreover, Hamilton \cite{[Ham]} derived the following equations for scalar
curvature and Ricci curvature:
\begin{equation}\label{Sequat}
\Delta_f S= S-2|Ric|^2
\end{equation}
and
\begin{equation}\label{evo}
\Delta_fR_{ij}=R_{ij}-2R_{ikjl}R_{kl},
\end{equation}
where $R_{ij}$ and $R_{ikjl}$ are the Ricci curvature and the Riemannian
curvature of $(M,g,f)$.

On the other hand, Chen \cite{[Chen]} proved that $S\ge 0$ on any complete
shrinker. From \cite{[PiRS]}, we know that $S>0$, unless $(M,g,f)$ is the
Gaussian shrinker $(\mathbb{R}^n, g_E, \frac{|x|^2}{4})$. Later, Chen's
result was refined by Chow-Lu-Yang \cite{[CLY]} to be that
\[
S\ge c\,f^{-1}
\]
for some constant $c>0$ on any non-flat shrinker. This estimate is sharp and
achieved by the complete noncompact K\"ahler  shrinkers constructed by
Feldman-Ilmanen-Knopf \cite{[FIK]}.

From Cao-Zhou's work \cite{[CZ]} (see also Haslhofer-M\"uller \cite{[HM]}),
the potential function $f$ has quadratic growth at infinity. Precisely,
\begin{theorem}\label{pote}
Let $(M,g, f)$ be an $n$-dimensional complete noncompact shrinker
with \eqref{Eq1} and \eqref{equat}. Then there exists a point $p\in M $
where $f$ attains its infimum. Moreover,
\[
\frac 14\left[\big(r(x)-5n\big)_{+}\right]^2\le f(x)\le\frac 14\left(r(x)+\sqrt{2n}\right)^2,
\]
where $r(x)$ is a distance function from $p$
to $x$, and $a_+=\max\{a,\,0\}$ for $a\in \mathbb{R}$.
\end{theorem}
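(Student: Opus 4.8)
The plan is to reduce the statement to a one-variable analysis of $f$ along geodesics issuing from a minimum point, using only the soliton identities \eqref{Eq1}--\eqref{feq} and Chen's result that $S\ge 0$. First I would record the basic gradient estimate. Combining $S\ge0$ with \eqref{equat} gives $\abs{\nabla f}^2=f-S\le f$, so $f\ge0$ everywhere, and wherever $f>0$,
\[
\bigl|\nabla\sqrt f\,\bigr|=\frac{\abs{\nabla f}}{2\sqrt f}\le\frac12 .
\]
Thus $\sqrt f$ is $\tfrac12$-Lipschitz, and in particular $\abs{\nabla f}\le\sqrt f$.

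Next I would produce the minimum point. The quadratic lower bound derived below works verbatim from an arbitrary fixed base point (producing only a base-point-dependent constant), and it shows $f(x)\to\infty$ as $r(x)\to\infty$; hence $f$ is proper. Being proper and bounded below on a complete manifold, $f$ attains its infimum at some $p\in M$, where necessarily $\nabla f(p)=0$. At $p$, \eqref{equat} gives $f(p)=S(p)\ge0$, while \eqref{feq} reads $\Delta_f f(p)=\tfrac n2-f(p)$; since $\nabla f(p)=0$ we have $\Delta_f f(p)=\Delta f(p)\ge0$ at the minimum, whence $f(p)\le\tfrac n2$ and $\sqrt{f(p)}\le\tfrac{\sqrt{2n}}{2}$. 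The upper bound is then immediate: integrating $\bigl|\nabla\sqrt f\,\bigr|\le\tfrac12$ along a minimizing geodesic from $p$ to $x$ gives $\sqrt{f(x)}\le\sqrt{f(p)}+\tfrac12 r(x)\le\tfrac12\bigl(\sqrt{2n}+r(x)\bigr)$, and squaring yields $f(x)\le\frac14\bigl(r(x)+\sqrt{2n}\bigr)^2$.

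The lower bound is the heart of the matter. Let $\gamma:[0,r_0]\to M$ be a unit-speed minimizing geodesic from $p$ to $x$, with $r_0=r(x)$, and set $\phi(s)=f(\gamma(s))$. Since $\gamma$ is a geodesic, \eqref{Eq1} gives $\phi''(s)=\mathrm{Hess}\,f(\dot\gamma,\dot\gamma)=\tfrac12-Ric(\dot\gamma,\dot\gamma)$, and integrating with $\phi'(0)=\langle\nabla f(p),\dot\gamma\rangle=0$ yields
\[
\phi'(r_0)=\frac{r_0}{2}-\int_0^{r_0}Ric(\dot\gamma,\dot\gamma)\,ds .
\]
I would combine this with the constraint $\phi'(r_0)=\langle\nabla f,\dot\gamma\rangle\le\abs{\nabla f}\le\sqrt{f(x)}$: the task is to show $\phi'(r_0)\ge\tfrac{r_0}{2}-C(n)$ for a dimensional constant $C(n)$, for then $\sqrt{f(x)}\ge\tfrac{r_0}{2}-C(n)$, and choosing the constants so that $2C(n)=5n$ gives $f(x)\ge\frac14\bigl[(r(x)-5n)_+\bigr]^2$ (the positive part absorbing the range $r_0\le 5n$). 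The required lower bound on $\phi'(r_0)$ is where the minimizing property of $\gamma$ enters, through the nonnegativity of the index form $\int_0^{r_0}\bigl[(n-1)(\psi')^2-\psi^2 Ric(\dot\gamma,\dot\gamma)\bigr]\,ds\ge0$ (summed over $n-1$ parallel normal fields, for $\psi$ with $\psi(0)=\psi(r_0)=0$), combined with the identity $\phi''=\tfrac12-Ric(\dot\gamma,\dot\gamma)$, which converts the curvature integral into boundary terms in $\phi,\phi'$ that are controlled near $p$ trivially and near $x$ by the upper bound just established.

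The main obstacle is precisely this uniform (in $r_0$) control of $\int_0^{r_0}Ric(\dot\gamma,\dot\gamma)\,ds$ near the far endpoint $x$: a test function that is $\approx 1$ on the bulk of $\gamma$ must taper near $s=r_0$, down-weighting the curvature exactly where it could be large, so the sign of $Ric$ alone is insufficient and one must exploit the full soliton structure rather than a one-sided curvature bound. A natural tool here is the weighted Riccati identity along $\gamma$,
\[
\frac{d}{ds}\Delta_f r=-\abs{\mathrm{Hess}\,r}^2-\bigl(Ric+\mathrm{Hess}\,f\bigr)(\dot\gamma,\dot\gamma)=-\abs{\mathrm{Hess}\,r}^2-\tfrac12\le-\tfrac12 ,
\]
which encodes that the Bakry--\'Emery tensor equals $\tfrac12 g$ by \eqref{Eq1}; feeding such identities back into the formula for $\phi'(r_0)$ is the delicate point, and it is what ultimately pins down the explicit constant $5n$, the remaining steps being the routine integrations above.
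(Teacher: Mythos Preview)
The paper does not supply its own proof of this theorem: it is quoted as a known result, attributed to Cao--Zhou \cite{[CZ]} with the sharp constants due to Haslhofer--M\"uller \cite{[HM]}. So there is no in-paper argument to compare against; your sketch is in fact an outline of the Cao--Zhou/Haslhofer--M\"uller proof itself.

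Your treatment of the upper bound is complete and correct: $S\ge 0$ gives $\abs{\nabla\sqrt f}\le\tfrac12$, the minimum exists by properness, $f(p)\le\tfrac n2$ at the minimum, and integration along a minimizing geodesic gives the stated inequality. This is exactly how the original sources proceed.

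For the lower bound, however, your write-up stops short of an actual proof. You correctly set up $\phi'(r_0)=\tfrac{r_0}{2}-\int_0^{r_0}Ric(\dot\gamma,\dot\gamma)\,ds$ and identify the index form with a suitable test function $\psi$ as the mechanism; but the crucial estimate $\int_0^{r_0}Ric(\dot\gamma,\dot\gamma)\,ds\le C(n)$ is never established. The phrase ``converts the curvature integral into boundary terms in $\phi,\phi'$ that are controlled near $p$ trivially and near $x$ by the upper bound just established'' is where the work lies, and you have not done it: one must choose an explicit $\psi$ (e.g.\ linear on $[0,1]$ and $[r_0-1,r_0]$, identically $1$ in between), plug into the index inequality, integrate by parts using $\phi''=\tfrac12-Ric(\dot\gamma,\dot\gamma)$ on the end intervals, and bound the resulting $\phi'$-terms by $\sup\abs{\nabla f}$ on balls of radius $1$ about $p$ and $x$ --- the latter requiring the already-proved upper bound to be uniform in $x$. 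Only after this computation does one see why a constant of order $n$ (and specifically $5n$ in the Haslhofer--M\"uller normalization) emerges. The Bakry--\'Emery Riccati identity you mention at the end is suggestive but is not the route actually used, and as written does not by itself close the argument. In short: right strategy, but the decisive estimate is asserted rather than proved.
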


Combining this with \eqref{equat}, if the scalar curvature $S$ is
bounded, then there exists $r_0>0$ such that the level set
\[
\Sigma(r):=\{x\in M|f(x)=r\}
\]
of $f$ is a compact manifold for $r\ge r_0$. Meanwhile, the
domain
\[
D(r):=\{x\in M|f(x)\le r\}
\]
is also a compact manifold with boundary $\Sigma(r)$.

On  an $n$-dimensional complete noncompact shrinker $(M,g,f)$, by
\cite{[CZ],[MW12]}, there exist constants $c_1$ and $c_2$ such that
\[
c_1 r\le V(B_p(r))\le c_2r^n
\]
for any $r>1$, where $V(B_p(r))$ denotes the volume of the geodesic
ball $B_p(r)$ with radius $r$ and center at $p\in M$. Combining this
with Theorem \ref{pote}, we immediately conclude that the weighted
volume of $M$ is finite. That is,
\[
V_f(M):=\int_Me^{-f}dv<\infty.
\]
Moreover, Munteanu-Sesum \cite{[MS]} showed that
\begin{equation}\label{Ricfini}
\int_M|Ric|^2e^{-\lambda f}dv<\infty
\end{equation}
for any constant $\lambda>0$, on any shrinker $(M,g,f)$. This property will be
used in the proof of our results.

As we all know, the curvature operator on any $3$-dimensional shrinker
must be nonnegative, and hence is bounded by the scalar curvature. For
dimension $4$ and higher, the curvature operator no longer has a fixed sign;
see Feldman-Ilmanen-Knopf's example in \cite{[FIK]}. But for dimension $4$,
Munteanu-Wang \cite{[MWa]} showed that the curvature operator still can be
controlled by the bounded scalar curvature. Namely,
\begin{theorem}\label{RmdyS}
Let $(M, g, f)$ be a $4$-dimensional shrinker with scalar curvature
$S\le c_1$ for some constant $c_1>0$. Then there exists a constant $c>0$
such that
\[
|Rm|\le c\,S
\]
on $(M, g, f)$. Here the constant $c$ depends only on $c_1$ and the geometry
of the geodesic ball $B_p(r_0)$,
where $p$ is a minimum point of $f$ and $r_0$ is determined by $c_1$.
\end{theorem}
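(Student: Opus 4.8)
The plan is to reduce the bound to controlling the trace-free part of the curvature by the scalar curvature, and then to establish that control by combining the soliton elliptic equations with an iteration argument. In dimension four one has the pointwise algebraic identity
\[
|Rm|^2 = |W|^2 + 2|\mathring{Ric}|^2 + \tfrac16 S^2,
\]
where $W$ is the Weyl tensor and $\mathring{Ric}=Ric-\tfrac14 Sg$ is the trace-free Ricci tensor. Since $S\le c_1$, the last term already satisfies $\tfrac16 S^2\le \tfrac16 c_1 S$, so it suffices to prove $|W|\le cS$ and $|\mathring{Ric}|\le cS$. The special feature of dimension four that makes this tractable is the splitting $\Lambda^2=\Lambda^+\oplus\Lambda^-$ of two-forms into self-dual and anti-self-dual parts, under which the curvature operator, and hence the reaction term $Rm\ast Rm$ appearing below, takes a particularly rigid algebraic form; this is what prevents the estimate from degenerating the way it would in higher dimensions.

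First I would record the relevant elliptic equations. From \eqref{Sequat} the scalar curvature satisfies $\Delta_f S=S-2|Ric|^2$, and from \eqref{evo} together with the second Bianchi identity the full curvature tensor obeys a drift equation of the schematic form $\Delta_f Rm=Rm-Rm\ast Rm$. Applying Kato's inequality gives $\Delta_f|Rm|\ge -c|Rm|^2$ wherever $|Rm|\neq0$, and analogous inequalities hold for $|W|$ and $|\mathring{Ric}|$. The next step is the a priori integral bound $\int_M|Rm|^2 e^{-f}\,dv<\infty$: starting from the weighted $L^2$ finiteness \eqref{Ricfini} of the Ricci curvature and integrating the contracted Bianchi identity against $e^{-f}$ (together with the finiteness of the weighted volume) controls the covariant derivatives of $Ric$ and, in dimension four, the Weyl tensor through its divergence, yielding the claimed finiteness. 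With this in hand I would run a De Giorgi--Nash--Moser iteration for the subsolution $|Rm|$ against the weighted measure $e^{-f}dv$, using the Sobolev inequality available on the shrinker. This converts the integral bound into the qualitative pointwise bound $\sup_M|Rm|<\infty$, with a constant depending on $c_1$ and on the geometry of $B_p(r_0)$, which is precisely where the initial data for the iteration lives and where $r_0$ is fixed by $c_1$.

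Finally I would upgrade the qualitative bound $|Rm|\le c$ to the sharp bound $|Rm|\le cS$ by a contradiction-and-blow-up argument at infinity, since the two bounds differ only where $S$ becomes small, i.e. on the ends (recall $S\gtrsim f^{-1}$ and $f\sim \tfrac14 r^2$ by Theorem \ref{pote}). Suppose there were points $x_i\to\infty$ with $|Rm|(x_i)/S(x_i)\to\infty$; rescaling and passing to a pointed smooth limit along the integral curves of $\nabla f$, the now-established curvature bound and the quadratic growth of $f$ produce a limiting shrinker whose scalar curvature vanishes identically, which by \cite{[PiRS]} forces the limit to be the flat Gaussian shrinker with $|Rm|\equiv0$, contradicting $|Rm|(x_i)/S(x_i)\to\infty$. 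I expect the main obstacle to be this last step: making the rescaling precise, guaranteeing smooth (non-collapsed) convergence of the rescaled metrics so that the limit is a genuine model, and identifying its scalar curvature as zero. Securing the correct sign for the reaction term $Rm\ast Rm$ in the iteration --- which is exactly where the $\Lambda^+\oplus\Lambda^-$ structure of four-manifolds enters --- is the other delicate point, and is the reason the statement is special to dimension four.
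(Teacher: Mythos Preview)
The paper does not prove this theorem; it is quoted from Munteanu--Wang \cite{[MWa]} as background in the preliminaries. So there is no in-paper proof to compare against, and your proposal has to be assessed on its own merits.

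The outline through the qualitative bound $\sup_M|Rm|<\infty$ has broadly the right shape, though two of its steps are under-argued. First, deducing $\int_M|Rm|^2e^{-f}\,dv<\infty$ from the weighted $L^2$ bound on $Ric$ via ``the contracted Bianchi identity'' is not immediate; in four dimensions one can get there via the Chern--Gauss--Bonnet density or via an integration-by-parts identity for the Weyl tensor, but you have not supplied either. Second, Moser iteration applied to the nonlinear inequality $\Delta_f|Rm|\ge -c|Rm|^2$ needs the coefficient $|Rm|$ in $L^p$ for some $p>n/2=2$; having only $|Rm|\in L^2(e^{-f}dv)$ is exactly the borderline case and does not suffice without an extra step.

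The final step is where the argument actually breaks. Your blow-up does not produce the contradiction you claim. If you do not rescale and simply take the pointed limit of $(M,g,x_i)$, then with bounded curvature Naber's Theorem~\ref{infi} gives a limit $\mathbb{R}\times N^3$ with $N^3$ a three-dimensional shrinker, and its scalar curvature is typically positive (e.g.\ $N^3=\mathbb{S}^3/\Gamma$ or a quotient of $\mathbb{R}\times\mathbb{S}^2$); nothing forces the Gaussian soliton. If instead you rescale the metric, the soliton equation degenerates in the limit and the limit is no longer a shrinker, so \cite{[PiRS]} does not apply. And even were the limit flat, you would only conclude $|Rm|(x_i)\to0$, which says nothing about the ratio $|Rm|/S$: both numerator and denominator can tend to zero with the quotient diverging, so no contradiction follows. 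The actual Munteanu--Wang proof avoids compactness at this stage and instead applies maximum-principle type arguments directly to quotients such as $|Ric|/S$ and then $|Rm|/S$, using the drift elliptic equations together with the special four-dimensional algebra of the reaction term; the sharp bound $|Rm|\le cS$ comes from these differential inequalities, not from a blow-up.
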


In another direction, Naber \cite{[Na]} applied the singular reduced
length functions of Perelman to get the geometry at infinity
of a shrinker. This result will be repeatedly used in our proof.
\begin{theorem}\label{infi}
Let $(M, g, f)$ be an $n$-dimensional shrinker with bounded curvature
operator. Then for all sequences $x_i\in M$ going to infinity along an
integral curve of $\nabla f$, there exists a subsequence also denoted
by $x_i$, such that $(M,g,x_i)$ smoothly converges to a product manifold
$\mathbb{R}\times N$, where $N$ is an $(n-1)$-dimensional shrinker.
\end{theorem}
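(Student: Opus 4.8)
The plan is to realize the shrinker as a self-similar ancient Ricci flow and to read off the geometry at infinity by combining a Cheeger--Gromov--Hamilton compactness argument with Perelman's (singular) reduced length geometry, following \cite{[Na]}. Associated with \eqref{Eq1} is the ancient solution $g(t)=(1-t)\,\phi_t^{*}g$, $t\in(-\infty,1)$, where $\phi_t$ is generated by $\tfrac{1}{1-t}\nabla f$; the integral curves of $\nabla f$ are precisely the orbits of the $\phi_t$, so letting $x_i\to\infty$ along such a curve amounts to flowing into the ancient past. Since the curvature operator is bounded, self-similarity gives curvature bounds for $g(t)$ on every compact subinterval of $(-\infty,1)$, and complete shrinkers with bounded curvature are $\kappa$-noncollapsed (as used in \cite{[Na]}), so the injectivity radii at the $x_i$ are bounded below. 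Hamilton's compactness theorem then yields a subsequence along which the pointed flows converge smoothly to a limit ancient solution, in particular giving a pointed limit $(M_\infty,g_\infty,x_\infty)$ of $(M,g,x_i)$.

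Next I would produce the line factor directly from the soliton structure. Set $\rho:=2\sqrt f$. By \eqref{equat} we have $|\nabla\rho|^2=f^{-1}|\nabla f|^2=1-S/f$, which tends to $1$ as $f\to\infty$ because $S$ is bounded and $f\to\infty$ along the orbit by Theorem \ref{pote}. Differentiating once more and using $\mathrm{Hess}\,f=\tfrac12 g-Ric$ from \eqref{Eq1},
\[
\mathrm{Hess}\,\rho=f^{-1/2}\Big(\tfrac12 g-Ric\Big)-\tfrac12\,f^{-3/2}\,df\otimes df,
\]
and both terms are $O(f^{-1/2})$ (the second since $|df\otimes df|\le|\nabla f|^2=f-S$), so $\mathrm{Hess}\,\rho\to 0$ at infinity. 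Bounded curvature and Shi-type derivative estimates control the higher covariant derivatives of $\rho$, so the functions $\rho-\rho(x_i)$ subconverge to a function $u$ on $M_\infty$ with $|\nabla u|\equiv 1$ and $\mathrm{Hess}\,u\equiv 0$. Such a parallel unit gradient field splits $(M_\infty,g_\infty)$ isometrically as $\mathbb{R}\times N$, with the $\mathbb{R}$-factor tangent to $\nabla u=\lim f^{-1/2}\nabla f$, i.e. to the soliton vector field.

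It remains to upgrade this metric splitting to a splitting of \emph{shrinkers}, which is the crux. Here the naive limit of $f$ is useless, since $f\sim\tfrac14\rho^2$ blows up along the orbit; instead I would use Perelman's reduced geometry as in \cite{[Na],[Pe]}. For the soliton the reduced length based at the singular time is, after normalization, exactly $f$, and the associated reduced volume is constant in $\tau=1-t$. Along the orbit this reduced volume is preserved in the rescaled flow, so the reduced volume of the limit ancient solution is again constant in $\tau$; the equality case of Perelman's reduced-volume monotonicity then forces the limit to be a gradient shrinking soliton. A gradient shrinker whose underlying metric is $\mathbb{R}\times N$ decomposes, by the de Rham theorem together with \eqref{Eq1} restricted to the parallel factor, as the Gaussian soliton on $\mathbb{R}$ times a shrinker structure on $N$; this identifies $N$ as the desired $(n-1)$-dimensional shrinker.

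The main obstacle is this last rigidity step: passing from a smooth limit of the self-similar flow to the assertion that the limit is itself self-similar requires the full strength of the equality case of Perelman's reduced-volume monotonicity, together with a careful treatment of the reduced length functions based at the singular time (the ``singular reduced length'' of \cite{[Na]}). Two further points need care: verifying $\kappa$-noncollapsing at the ends so that Hamilton compactness applies, and checking that the de Rham factor produced in the second step is exactly the flat Gaussian direction of the limiting soliton, so that the soliton potential genuinely descends to $N$. The elementary $\rho$-computation delivers the metric product quickly; the soliton structure on $N$ is where the real work lies.
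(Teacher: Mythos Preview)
The paper does not prove this statement at all: Theorem~\ref{infi} is stated in the Preliminaries section as Naber's structure theorem and is simply cited from \cite{[Na]} without proof. So there is no ``paper's own proof'' to compare against, and your proposal is in effect a sketch of the argument in \cite{[Na]} rather than a reconstruction of something in this paper.

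That said, your outline is essentially the right one, and it is worth noting that the paper does carry out the \emph{metric splitting} portion of this argument elsewhere, namely in the proof of Claim~\ref{cla2} inside the proof of Theorem~\ref{thm2}. There the authors use the normalization
\[
f_i(x)=\frac{f(x)-f(p_i)}{|\nabla f(p_i)|},
\]
verify $|\nabla f_i(p_i)|=1$ and $|\nabla^2 f_i|\to 0$ from boundedness of curvature and \eqref{lininfty}, and pass to a limit $f_\infty$ with $|\nabla f_\infty|=1$, $\nabla^2 f_\infty\equiv 0$, which gives the $\mathbb{R}$-factor. Your choice $\rho=2\sqrt{f}$ achieves the same end via the identity $|\nabla\rho|^2=1-S/f$ and the Hessian computation you wrote down; both normalizations are standard and equivalent for this purpose. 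Note, however, that in the paper's Claim~\ref{cla2} the conclusion is only that the limit is $\mathbb{R}\times N$ with $N$ an ancient $\kappa$-noncollapsed solution, not that $N$ is a shrinker; the full soliton conclusion on $N$ is precisely what the paper outsources to \cite{[Na]} via Theorem~\ref{infi}.

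Your identification of the ``main obstacle'' is accurate: the genuinely nontrivial step is the rigidity case of Perelman's reduced-volume monotonicity with a basepoint at the singular time, which is the technical heart of \cite{[Na]}. Your sketch of that step (constant reduced volume forces the limit to be a shrinker, then de Rham plus \eqref{Eq1} restricted to the parallel direction splits the soliton potential) is correct in outline, but as you acknowledge it is not a self-contained proof; a complete argument requires the construction and continuity properties of the singular reduced length as developed in \cite{[Na]}.
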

In Theorem \ref{infi}, we do not know whether the limit manifold
$\mathbb{R}\times N$ depends on the choice of sequence $x_i$. When $N$
is a quotient of the round sphere, Munteanu-Wang \cite{[MW19]} improved
Naber's result. More precisely, they proved that if the round cylinder
$\mathbb{R}\times \mathbb{S}^{n-1}/\Gamma$ occurs as a limit for a
sequence of points going to infinity along an end of the shrinker,
then the end is smoothly asymptotic to the same round cylinder.

\section{Shrinkers without curvature upper bounds}\label{2thRic}
In this section, we mainly study compact theorems of shrinkers without upper bounds
of curvature. From the viewpoint of the $k$th-Ricci curvature, we prove Theorem
\ref{thm1} by analyzing the nonnegativity of $2Rm\ast Ric$ in \eqref{evo}, which
appears in Munteanu-Wang's argument \cite{[MW]}. Meanwhile we apply Theorem
\ref{thm1} to prove Corollary \ref{cor1}. We also give another compact theorem
of shrinkers under certain curvature lower assumption.

Let $\lambda_1\le\lambda_2\le\ldots\le \lambda_n$ be eigenvalues of
Ricci curvature $Ric$. Indeed, at a point $x\in M$, let $e_1$ be
an eigenvector with respect to the minimal eigenvalues $\lambda_1$.
Then we extend $e_1$ to an orthonormal basis $\{e_1,e_2,\ldots, e_n\}$
such that $\{e_i\}^n_{i=1}$ are the eigenvectors of $Ric$ with respect
to the corresponding eigenvalues $\{\lambda_i\}^n_{i=1}$. By \eqref{evo},
we have
\begin{equation}\label{evoRic}
\Delta_f \lambda_1\le \lambda_1-2Rm(e_1,e_i,e_1,e_j)Ric(e_i,e_j)
\end{equation}
in the barrier sense.
Diagonalizing the Ricci curvature $Ric$ such that
$R_{kl}:=Ric(e_k,e_l)=\lambda_k\delta_{kl}$. Then,
\begin{align*}
I&:=2Rm(e_1,e_i,e_1,e_j)Ric(e_i,e_j)\\
&=2(K_{12}\lambda_2+K_{13}\lambda_3+\ldots+K_{1n}\lambda_n),
\end{align*}
where $K_{ij}$ denotes the sectional curvature of the plane spanned by $e_i$
and $e_j$. Notice that, we also have
\begin{align*}
\lambda_1&:=R_{11}=K_{12}+K_{13}+\ldots+ K_{1n},\\
\lambda_2&:=R_{22}=K_{21}+K_{23}+\ldots+ K_{2n},\\
&\quad\cdots\cdots\\
\lambda_{n-1}&:=R_{(n-1)(n-1)}=K_{(n-1)1}+K_{(n-1)2}+\ldots+K_{(n-1)(n-2)}+K_{(n-1)n},\\
\lambda_n&:=R_{nn}=K_{n1}+K_{n2}+\ldots+K_{n(n-1)}.
\end{align*}

Below, we will show the following claim, which will be used in the proof of
Theorem \ref{thm1}.
\begin{claim}\label{cla}
If $Ric_{(2)}(M)\ge 0$, then
\[
I:=2(K_{12}\lambda_2+K_{13}\lambda_3+\ldots+K_{1n}\lambda_n)\ge 0.
\]
\end{claim}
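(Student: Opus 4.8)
The plan is to express the quantity $I$ entirely in terms of the sectional curvatures $K_{1j}$ together with the "second slot" sums $\sigma_j := \sum_{k \ne 1, j} K_{jk}$, so that $\lambda_j = K_{1j} + \sigma_j$ for each $j \geq 2$. Substituting this into $I/2 = \sum_{j=2}^{n} K_{1j}\lambda_j = \sum_{j=2}^{n} K_{1j}(K_{1j}+\sigma_j) = \sum_{j=2}^n K_{1j}^2 + \sum_{j=2}^n K_{1j}\sigma_j$. The first sum is manifestly nonnegative, so everything reduces to controlling the cross term $\sum_{j=2}^n K_{1j}\sigma_j = \sum_{j=2}^n \sum_{k \ne 1,j} K_{1j}K_{jk}$. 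The key structural input is the hypothesis $Ric_{(2)}(M) \geq 0$, which by the definition recalled in the introduction says precisely that $K_{1j} + K_{1k} \geq 0$ for every pair of distinct indices $j,k \neq 1$ (taking the $3$-plane spanned by $e_1, e_j, e_k$ with $v = e_1$); more generally $K_{ab} + K_{ac} \geq 0$ for any three distinct indices.

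First I would symmetrize the cross term. Since $\sum_{j \ne 1} \sum_{k \ne 1, j} K_{1j}K_{jk}$ is a sum over ordered pairs $(j,k)$ of distinct indices in $\{2,\dots,n\}$ with a factor $K_{jk}$ that is symmetric in $j,k$, I can rewrite it as $\sum_{\{j,k\}} K_{jk}(K_{1j} + K_{1k})$, summing over unordered pairs $\{j,k\} \subset \{2,\dots,n\}$. So
\[
\tfrac12 I = \sum_{j=2}^{n} K_{1j}^2 + \sum_{\{j,k\} \subset \{2,\dots,n\}} K_{jk}\,(K_{1j}+K_{1k}).
\]
Now the factor $K_{1j}+K_{1k}$ is nonnegative by $Ric_{(2)} \geq 0$, but $K_{jk}$ can have either sign, so I cannot conclude termwise. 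The remedy is to bound each potentially-negative contribution $K_{jk}(K_{1j}+K_{1k})$ from below using $Ric_{(2)}\geq 0$ once more: applying the pinching to the $3$-plane $\langle e_j, e_1, e_k\rangle$ with $v = e_j$ gives $K_{1j} + K_{jk} \geq 0$, i.e. $K_{jk} \geq -K_{1j}$, and symmetrically $K_{jk} \geq -K_{1k}$, hence $2K_{jk} \geq -(K_{1j}+K_{1k})$. When $K_{jk} < 0$ this yields $K_{jk}(K_{1j}+K_{1k}) \geq -\tfrac12 (K_{1j}+K_{1k})^2 \geq -(K_{1j}^2 + K_{1k}^2)$ (using $(a+b)^2 \le 2(a^2+b^2)$).

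The main obstacle — and where the argument must be done carefully rather than crudely — is the bookkeeping after this last step: summing the bound $K_{jk}(K_{1j}+K_{1k}) \geq -(K_{1j}^2 + K_{1k}^2)$ over all pairs $\{j,k\}$ produces $-(n-2)\sum_{j=2}^n K_{1j}^2$, which overwhelms the good term $\sum_{j=2}^n K_{1j}^2$ for $n \geq 4$. So the crude bound is lossy, and I expect the actual proof to exploit that $K_{1j}+K_{1k} \geq 0$ holds simultaneously for all pairs, which is a strong constraint: at most one index $j$ can have $K_{1j} < 0$, and even then $|K_{1j}|$ is dominated by $\min_{k}K_{1k}$ over the rest. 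Concretely I would split into the case where all $K_{1j} \geq 0$ (then also examine the sign of each $K_{jk}$ via $K_{jk} \geq -\min(K_{1j},K_{1k})$, and argue the negative pieces are absorbed by the diagonal $K_{1j}^2$ terms, perhaps via an AM–GM or Cauchy–Schwarz grouping) versus the case where exactly one $K_{1j_0} < 0$ (then $K_{1k} \geq |K_{1j_0}|$ for all $k \neq j_0$, which gives a lot of room). Alternatively — and this may be the cleaner route — one recognizes $I$ as $2$ times a quadratic form in the variables $K_{ab}$ whose positive semidefiniteness on the cone cut out by the inequalities $\{K_{ab}+K_{ac}\geq 0\}$ can be certified directly; I would look for a representation of $I$ as a nonnegative combination of the products (nonnegative quantity)$\times$(nonnegative quantity from $Ric_{(2)}\geq 0$) plus sums of squares. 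Settling which of these packagings actually closes is the crux; the surrounding inequalities $\lambda_j = K_{1j}+\sigma_j$ and the identities relating the $\lambda_i$ to the $K_{ij}$ are the only other ingredients needed.
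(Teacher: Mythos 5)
Your write-up is not a proof: it is a plan whose decisive step is left open, and the one quantitative estimate you do carry out is, as you yourself observe, too weak to close. Concretely, after the (correct) decomposition $\tfrac12 I=\sum_{j\ge 2}K_{1j}^2+\sum_{\{j,k\}}K_{jk}(K_{1j}+K_{1k})$, the bound $K_{jk}(K_{1j}+K_{1k})\ge -(K_{1j}^2+K_{1k}^2)$ produces $-(n-2)\sum_j K_{1j}^2$, which loses for $n\ge 4$; even the sharper termwise bound $K_{jk}(K_{1j}+K_{1k})\ge -2K_{1j}K_{1k}$ only yields $\tfrac12 I\ge 2\sum_j K_{1j}^2-\lambda_1^2$, which is still not manifestly nonnegative. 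The subsequent paragraph (``perhaps via an AM--GM or Cauchy--Schwarz grouping \dots I would look for a representation of $I$ as a nonnegative combination \dots Settling which of these packagings actually closes is the crux'') is exactly the part that has to be done, and it is not done. As it stands the argument establishes nothing beyond the hypotheses themselves.

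The missing idea is that the inequality should be certified \emph{linearly}, not by a quadratic/sum-of-squares bookkeeping in the $K_{1j}$. You correctly note that $Ric_{(2)}\ge 0$ forces at most one $K_{1i}$ to be negative (and likewise gives $2\lambda_j\ge 0$ for every $j$, by pairing the terms of $2\lambda_j=\sum 2K_{jk}$ into sums $K_{jk}+K_{jl}\ge 0$), but you never exploit this. The paper's proof runs a case analysis on which single $K_{1i}$ is nonpositive: if $K_{1i}\le 0$ with $i<n$, one uses $\lambda_i\le\lambda_n$ to write $K_{1i}\lambda_i\ge K_{1i}\lambda_n$ and then absorbs the bad term into $(K_{1n}+K_{1i})\lambda_n\ge 0$, the remaining terms being products of nonnegative quantities; if instead $K_{1n}\le 0$ (so no larger eigenvalue is available), one expands each $\lambda_j$ in sectional curvatures and regroups the terms $2K_{1j}K_{jn}$ with $2K_{1n}^2$ into $\sum_j 2K_{jn}(K_{1n}+K_{jn})\ge 0$, using $K_{jn}\ge -K_{n1}\ge 0$. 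Your framework could in principle be completed along similar lines, but the completion is precisely the content of the claim, so the proposal as written has a genuine gap.
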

\begin{proof}[Proof of Claim \ref{cla}]
We prove the claim by three steps according to the sign of $K_{1i}$,
where $i=1,2,\ldots, n$. Since $Ric_{(2)}(M)\ge 0$, then we have only
one $K_{1i}\le 0$ for some $2\le i\le n$.

\emph{Step 1}: when $K_{12}\le 0$, from the condition $Ric_{(2)}(M)\ge 0$,
we see that $K_{13}, \ldots, K_{1n}$ are all nonnegative.
Using $K_{12}\le 0$ and $\lambda_2\le\lambda_n$, we get
\begin{align*}
I&:=2K_{12}\lambda_2+2K_{13}\lambda_3+\ldots+2K_{1n}\lambda_n\\
&\ge 2K_{12}\lambda_n+2K_{13}\lambda_3+\ldots+2K_{1n}\lambda_n\\
&=2K_{13}\lambda_3+\ldots+2K_{1(n-1)}\lambda_{n-1}+2(K_{1n}+K_{12})\lambda_n.
\end{align*}
Since $Ric_{(2)}(M)\ge 0$, then
\begin{align*}
2\lambda_3&=2K_{31}+2K_{32}+2K_{34}+\ldots+ 2K_{3n}\\
&=(K_{31}+K_{3n})+(K_{32}+K_{3(n-1)})+\ldots+(K_{3(n-1)}+K_{32})+(K_{3n}+K_{31})\ge 0,\\
&\quad\cdots\cdots\\
2\lambda_{n-1}&=2K_{(n-1)1}+2K_{(n-1)2}+\ldots+2K_{(n-1)(n-2)}+2K_{(n-1)n}\\
&=(K_{(n-1)1}+K_{(n-1)n})+(K_{(n-1)2}+K_{(n-1)(n-2)})+\ldots+2(K_{(n-1)(n-2)}+K_{(n-1)2})\\
&\quad+(K_{(n-1)n}+K_{(n-1)1})\ge 0,\\
2\lambda_n&=2K_{n1}+2K_{n2}+\ldots+2K_{n(n-1)}\\
&=(K_{n1}+K_{n(n-1)})+(K_{n2}+K_{n(n-2)})+\ldots+(K_{n(n-1)}+K_{n1})\ge0,
\end{align*}
and
\[
K_{1n}+K_{12}\ge 0.
\]
Putting these together, we conclude that $I\ge 0$ on $M$.

\emph{Step 2}: when $K_{1i}\le 0$, where $3\le i\le n-1$, the argument
is similar to Step 1. Indeed, from $Ric_{(2)}(M)\ge 0$  we get that
$K_{12}, \ldots, K_{1(i-1)}, K_{1(i+1)}, \ldots, K_{1n}$ are all nonnegative.
Then using $K_{1i}\le 0$ and $\lambda_i\le \lambda_n$, we have that
\begin{align*}
I&:=2K_{12}\lambda_2+2K_{13}\lambda_3+\ldots+2K_{1n}\lambda_n\\
&\ge 2K_{12}\lambda_2+\ldots+2K_{1i}\lambda_n+\ldots+2K_{1n}\lambda_n\\
&=2K_{12}\lambda_2+\ldots+2K_{1(i-1)}\lambda_{i-1}+2K_{1(i+1)}\lambda_{i+1}+\ldots
+2(K_{1n}+K_{1i})\lambda_n\\
&\ge 0,
\end{align*}
where in the last inequality, we used
$\lambda_2,\ldots,\lambda_{i-1},\lambda_{i+1},\ldots,\lambda_n$
are all nonnegative and $K_{1n}+K_{1i}\ge0$ by a similar argument of Step 1,
due to the condition $Ric_{(2)}(M)\ge 0$. So, $I\ge 0$.

\emph{Step 3}: when $K_{1n}\le 0$. This case is a little different from
the above two cases. From the curvature condition $Ric_{(2)}(M)\ge 0$, we see that
$K_{12}, K_{13}, \ldots, K_{1(n-1)}$ are all nonnegative and
$K_{n2}, K_{n3}, \ldots, K_{n(n-1)}$ are also all nonnegative.
Therefore,
\begin{align*}
I&:=2K_{12}\lambda_2+2K_{13}\lambda_3+\ldots+2K_{1n}\lambda_n\\
&=K_{12}(2K_{21}+2K_{23}+\ldots+ 2K_{2n}),\\
&\quad+K_{13}(2K_{31}+2K_{32}+2K_{34}+\ldots+ 2K_{3n})\\
&\quad\cdots\cdots\\
&\quad+K_{1n}(2K_{n1}+2K_{n2}+\ldots+2K_{n(n-1)})\\
&=K_{12}(2K_{21}+2K_{23}+\ldots+ 2K_{2(n-1)}),\\
&\quad+K_{13}(2K_{31}+2K_{32}+2K_{34}+\ldots+ 2K_{3(n-1)})\\
&\quad\cdots\cdots\\
&\quad+2K_{1n}^2+2K_{2n}(K_{1n}+K_{2n})+2K_{3n}(K_{1n}+K_{3n})+2K_{(n-1)n}(K_{1n}+K_{(n-1)n}),
\end{align*}
where in the last equality, we moved the terms $2K_{12}K_{2n}, 2K_{13}K_{3n},\ldots,2K_{1(n-1)}K_{n(n-1)}$
into the last line above. To go on checking our claim, let
\begin{align*}
I_2&:=K_{12}(2K_{21}+2K_{23}+\ldots+ 2K_{2(n-1)}),\\
I_3&:=K_{13}(2K_{31}+2K_{32}+2K_{34}+\ldots+ 2K_{3(n-1)}),\\
&\quad\cdots\cdots\\
I_n&:=2K_{1n}^2+2K_{2n}(K_{1n}+K_{2n})+2K_{3n}(K_{1n}+K_{3n})+2K_{(n-1)n}(K_{1n}+K_{(n-1)n}).
\end{align*}
Then $I$ can be written as
\[
I=I_2+I_3+\ldots+I_n.
\]
Since $K_{12}, K_{13}, \ldots, K_{1(n-1)}\ge 0$, $K_{n2},K_{n3}, \ldots, K_{n(n-1)}\ge0$
and $Ric_{(2)}(M)\ge 0$, we easily see that $I_2,I_3,\ldots I_n$ are all nonnegative.
Hence $I\ge 0$.

In summary, if $Ric_{(2)}(M)\ge 0$, we prove $I\ge 0$.
\end{proof}

\begin{remark}\label{addcla}
Similar to the proof of Claim \ref{cla}, for any fixed $1\le k\le n$,
we indeed can show that $2Rm(e_k,e_i,e_k,e_j)Ric(e_i,e_j)\ge 0$.
\end{remark}

We are now in position to prove Theorem \ref{thm1} by Claim \ref{cla}.

\begin{proof}[Proof of Theorem \ref{thm1}]
Similar to the argument of \cite{[MW]}, the theorem is proved by
contradiction. We sketch the proof and the details can be referred
to \cite{[MW]}. Assume that $(M,g,f)$ is complete noncompact, and let
$p$ be a minimum point of $f$.

Since $Ric>0$ on $(M,g,f)$,  we may let $\lambda_1=\lambda_1(x)>0$
denote the minimal eigenvalue of Ricci curvature at $x$. Combining
\eqref{evoRic} and Claim \ref{cla}, we have
\[
\Delta_f\lambda_1\le\lambda_1
\]
on shrinker, in the sense of barriers. For a sufficiently large $R_1$
depending only on $n$, let $a:=\min_{\partial B_p(R_1)}\lambda_1>0$
and then the function
\[
u:=\lambda_1-a f^{-1}-na f^{-2}
\]
satisfies
\[
u>0\quad \text{on} \quad\partial B_p(R_1).
\]
Using \eqref{feq}, we have
\begin{equation}\label{basic}
\Delta_f(f^{-1})\ge f^{-1}-\frac n2f^{-2}
\quad \text{and}\quad
\Delta_f(f^{-2})\ge \frac 32f^{-2}
\end{equation}
on $M\setminus B_p(R_1)$. Then $\Delta_fu\le u$ on $M\setminus B_p(R_1)$.
By the maximum principle, we can show that $u\ge 0$
on $M\setminus B_p(R_1)$ and hence there exists some constant $0<b<1$
such that
\[
Ric\ge bf^{-1}\quad\text{on}\quad M.
\]
Following the same argument of \cite{[MW]}, that is, applying the above
Ricci curvature estimate to the integral curve argument, by some basic
properties of shrinkers, we are able to prove that
\[
S\ge b\ln f
\]
on $M\setminus B_p(R_1)$. In particular, we have
\[
S\ge n
\]
on $M\setminus B_p(R_2)$ for some sufficiently large $R_2>R_1$.

On the other hand, recall that by \cite{[CZ]}, for all $r>0$, we have
\[
\int_{B_p(r)}S dv\le\frac{n}{2}V(B_p(r)).
\]
Since $S\ge n$ on $M\setminus B_p(R_2)$, then for any $r>R_2$,
\[
n\Big[V(B_p(r))-V(B_p(R_2))\Big]\le\int_{B_p(r)\backslash B_p(R_2)}S dv\le\frac{n}{2}V(B_p(r)).
\]
Since $c_1 r\le V(B_p(r))\le c_2r^n$ for any $r>1$, the above inequality
does not hold when $r$ is large enough.
\end{proof}

Next, Corollary \ref{cor1} easily follows from Theorem \ref{thm1}.
\begin{proof}[Proof of Corollary \ref{cor1}]
In Corollary 4 of \cite{[PW]}, Petersen and Wylie showed that the universal cover
of shrinker $(M,g,f)$ satisfying  $Rm(u, e_k, u, e_l)Ric(e_k, e_l)\geq 0$ for any $u$  and
$|Ric|\in L^2(e^{-f}dv)$ must be isometric to  $\mathbb{R}^k\times N$,
where $N$ has positive Ricci curvature. In our setting, since we assume
$Ric_{(2)}(M)\ge 0$, by Claim \ref{cla} and Remark \ref{addcla}, we conclude
that $Rm(u, e_k, u, e_l)Ric(e_k, e_l)\geq 0$ for any $u$. Moreover, by \eqref{Ricfini},
we see that $|Ric|\in L^2(e^{-f}dv)$ automatically holds for all shrinkers.
Hence the corollary follows.
\end{proof}

Finally we will provide another lower bounds of sectional curvature such
that shrinkers are compact. Recall that let
$\lambda_1\le \lambda_2\le\ldots\le\lambda_n$ denote eigenvalues of
Ricci curvature in $n$-dimensional shrinker $(M,g,f)$. At a point
$x\in M$, let $\{e_1,e_2,\ldots, e_n\}$ be an orthonormal basis at $x$
such that $\{e_i\}^n_{i=1}$ are the eigenvectors of $Ric$ with respect
to the corresponding eigenvalues $\{\lambda_i\}^n_{i=1}$.
Set
\[
K_{e_1}:=\inf_{2\le i\le n}{K_{1i}},
\]
where $K_{ij}$ denotes the sectional curvature of the plane spanned
by $e_i$ and $e_j$. The following result is also a generalization
of Munteanu-Wang's result \cite{[MW]}.
\begin{theorem}\label{Sect1}
Let $(M,g, f)$ be an $n$-dimensional shrinker with positive Ricci curvature.
If
\begin{equation}\label{Sectcond1}
(\lambda_i-\lambda_2)K_{e_1}\ge-\frac{\lambda_1\lambda_2}{n-2},\quad i=3,\cdots, n,
\end{equation}
for any $x\in M$, then $(M,g, f)$ must be compact.
\end{theorem}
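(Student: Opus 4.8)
The plan is to run the proof of Theorem \ref{thm1} almost verbatim, the only genuinely new ingredient being an algebraic inequality that turns the pinching hypothesis \eqref{Sectcond1} into the differential inequality $\Delta_f\lambda_1\le\lambda_1$. So I would argue by contradiction: assume $(M,g,f)$ is complete and noncompact, let $p$ be a minimum point of $f$, and let $\lambda_1=\lambda_1(x)>0$ be the smallest eigenvalue of $Ric$ at $x$, with $\{e_i\}$ the corresponding orthonormal eigenbasis as in the setup preceding \eqref{evoRic}.

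The heart of the matter is to show that \eqref{Sectcond1} forces
\[
I:=2Rm(e_1,e_i,e_1,e_j)Ric(e_i,e_j)=2\bigl(K_{12}\lambda_2+K_{13}\lambda_3+\cdots+K_{1n}\lambda_n\bigr)\ge0
\]
at every point. For this I would use the identity $\lambda_1=K_{12}+K_{13}+\cdots+K_{1n}$ to split off the $\lambda_2$-part of each summand and write
\[
I=2\lambda_2\sum_{i=2}^nK_{1i}+2\sum_{i=3}^nK_{1i}(\lambda_i-\lambda_2)=2\lambda_1\lambda_2+2\sum_{i=3}^nK_{1i}(\lambda_i-\lambda_2).
\]
Since $\lambda_i\ge\lambda_2$ for $i\ge3$ by the ordering of the eigenvalues, and $K_{1i}\ge K_{e_1}$ by the definition of $K_{e_1}$, each term satisfies $K_{1i}(\lambda_i-\lambda_2)\ge K_{e_1}(\lambda_i-\lambda_2)$ (multiplying an inequality by the nonnegative factor $\lambda_i-\lambda_2$); summing the hypothesis \eqref{Sectcond1} over the $n-2$ indices $i=3,\dots,n$ gives $K_{e_1}\sum_{i=3}^n(\lambda_i-\lambda_2)\ge-\lambda_1\lambda_2$, whence $I\ge2\lambda_1\lambda_2-2\lambda_1\lambda_2=0$. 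Plugging $I\ge0$ into \eqref{evoRic} then yields $\Delta_f\lambda_1\le\lambda_1$ in the barrier sense, which is precisely the estimate obtained for Theorem \ref{thm1} from Claim \ref{cla}.

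With this in hand the rest is exactly the Munteanu--Wang scheme already used for Theorem \ref{thm1}: choose $R_1$ large depending only on $n$, set $a:=\min_{\partial B_p(R_1)}\lambda_1>0$, and apply the maximum principle to $u:=\lambda_1-af^{-1}-naf^{-2}$ using \eqref{feq} and \eqref{basic} to conclude $Ric\ge bf^{-1}$ on $M$ for some $0<b<1$. Then the integral-curve argument of \cite{[MW]} upgrades this to $S\ge b\ln f$ on $M\setminus B_p(R_1)$, hence $S\ge n$ on $M\setminus B_p(R_2)$ for some larger $R_2>R_1$, which contradicts $\int_{B_p(r)}S\,dv\le\frac n2V(B_p(r))$ from \cite{[CZ]} combined with the volume bounds $c_1r\le V(B_p(r))\le c_2r^n$ once $r$ is large. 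The only genuinely new step is the algebraic one above, and I expect the sole subtlety there to be making sure the factors $\lambda_i-\lambda_2$ are handled with the right sign (they are nonnegative, so $K_{1i}\ge K_{e_1}$ may be multiplied through without flipping, and the case $K_{e_1}\ge0$ is immediate); everything downstream of $\Delta_f\lambda_1\le\lambda_1$ is routine and identical to the proof of Theorem \ref{thm1}.
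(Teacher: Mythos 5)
Your proposal is correct and follows essentially the same route as the paper: the identical decomposition $\sum_{i=2}^nK_{1i}\lambda_i=\sum_{i=3}^nK_{1i}(\lambda_i-\lambda_2)+\lambda_1\lambda_2$, the bound $K_{1i}(\lambda_i-\lambda_2)\ge K_{e_1}(\lambda_i-\lambda_2)\ge-\frac{\lambda_1\lambda_2}{n-2}$ summed over the $n-2$ indices, and then the Munteanu--Wang scheme from Theorem \ref{thm1}. Your write-up merely makes explicit the intermediate step involving $K_{e_1}$ and the sign of $\lambda_i-\lambda_2$, which the paper leaves implicit.
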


\begin{proof}[Proof of Theorem \ref{Sect1}]
The proof is very similar to the case of Theorem \ref{thm1}, and we only need
to check $\sum^n_{i=2}K_{1i}\lambda_i\ge 0$. Indeed, under our curvature
assumptions,
\begin{align*}
\sum^n_{i=2}K_{1i}\lambda_i&=\sum^n_{i=2}K_{1i}(\lambda_i-\lambda_2)+\lambda_1\lambda_2\\
&\ge\sum^n_{i=3}-\frac{\lambda_1\lambda_2}{n-2}+\lambda_1\lambda_2\\
&=0
\end{align*}
and the theorem follows by the preceding argument.
\end{proof}


\section{Shrinkers with  eigenvalues pinching condition I}\label{pinchI}

In this section, we will apply the arguments of \cite{[MW],[MW19],[Na]}
to study the compact property of shrinkers with positive Ricci curvature.
We mainly prove Theorem \ref{thm2} and Corollary \ref{cor1b} in the introduction.

\begin{proof}[Proof of Theorem \ref{thm2}]
Suppose by contradiction that $(M^n,g,f)$ is complete and noncompact.
We remark that our curvature assumption of theorem implies Riemannian curvature
is bounded. Because on $4$-dimensional shrinkers, the bounded scalar curvature
implies the bounded Riemmanian curvature; see Theorem \ref{RmdyS}.
We first prove the following claim.
\begin{claim}\label{cla2}
For a complete noncompact shrinker $(M^n,g,f)$ with $Ric>0$ and bounded curvature,
the minimal eigenvalue $\lambda_1(x)$ of Ricci curvature tends to zero uniformly at infinity.
\end{claim}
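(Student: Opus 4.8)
The plan is to argue by contradiction, using that the scalar curvature $S$ is bounded while, because $Ric>0$, it increases along the flow of $\nabla f$ at a rate comparable to $\lambda_1|\nabla f|^2$, and $|\nabla f|^2=f-S$ is large near infinity by Theorem \ref{pote}. Suppose $\lambda_1$ does not tend to zero uniformly at infinity; since $d(p,x)\to\infty$ is equivalent to $f(x)\to\infty$ by Theorem \ref{pote}, there are a constant $\epsilon_0>0$ and points $x_i\in M$ with $f(x_i)\to\infty$ and $\lambda_1(x_i)\ge\epsilon_0$. Bounded curvature gives $0\le S\le\bar S$ for some constant $\bar S$ (by Chen's theorem and the hypothesis), and also $|\nabla Rm|\le C$: this follows from Shi's derivative estimates applied to the self-similar Ricci flow $g(t)=(1-t)\varphi_t^{\ast}g$ associated with the shrinker, which is a complete flow of bounded curvature on $[-1,0]$ since $|\nabla f|$ has at most linear growth by Theorem \ref{pote}. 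In particular $|\nabla Ric|\le C$ as well.

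Next I would follow the integral curve $\gamma_i$ of $\nabla f$ with $\gamma_i(0)=x_i$. From $\tfrac{d}{dt}f(\gamma_i(t))=|\nabla f|^2=f-S\le f$ one gets $f(\gamma_i(t))\le e\,f(x_i)$ for $t\in[0,1]$, hence $|\dot\gamma_i(t)|=|\nabla f|(\gamma_i(t))\le\sqrt{e\,f(x_i)}$ there, so $|\nabla_{\dot\gamma_i}Ric|\le C\sqrt{e\,f(x_i)}$. Thus on the interval $[0,\tau_i]$, with $\tau_i:=\epsilon_0\bigl(4C\sqrt{e\,f(x_i)}\bigr)^{-1}\to 0$, all eigenvalues of $Ric$ change by at most $\epsilon_0/4$, so $Ric(\gamma_i(t))\ge\tfrac{3\epsilon_0}{4}\,g$ for $t\in[0,\tau_i]$. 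Combining this with $\nabla S=2Ric(\nabla f)$ and $f-S\ge f(x_i)-\bar S\ge\tfrac12 f(x_i)$ (valid for $i$ large),
\[
\frac{d}{dt}S(\gamma_i(t))=2\,Ric(\nabla f,\nabla f)\ge\frac{3\epsilon_0}{2}\,(f-S)\ge\frac{3\epsilon_0}{4}\,f(x_i)\qquad\text{on }[0,\tau_i].
\]
Integrating over $[0,\tau_i]$ gives $S(\gamma_i(\tau_i))-S(x_i)\ge\tfrac{3\epsilon_0}{4}f(x_i)\,\tau_i=c\,\epsilon_0^{2}\sqrt{f(x_i)}$ for a positive constant $c$, which tends to $\infty$; this contradicts $0\le S\le\bar S$ once $i$ is large, and the claim follows.

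The only nonroutine ingredient is the bound $|\nabla Rm|\le C$, which is where ``bounded curvature'' is genuinely used beyond controlling $S$; in the setting of Theorem \ref{thm2} it can alternatively be obtained from Theorem \ref{RmdyS} when $n=4$, or from the bounded sectional curvature assumption when $n\ge5$, combined with Shi's local estimates. I should also note that the conclusion can be reached more softly: a Cheeger--Gromov limit of $(M,g,x_i)$ exists by bounded curvature and $\kappa$-noncollapsing, the rescaled potentials $\bigl(f-f(x_i)\bigr)/|\nabla f|(x_i)$ have Hessian $|\nabla f|(x_i)^{-1}\bigl(\tfrac12 g-Ric\bigr)\to 0$, so the limit splits off a line (in the spirit of Naber's Theorem \ref{infi}) and has $\lambda_1\equiv 0$, contradicting $\lambda_1(x_i)\ge\epsilon_0$; the direct argument above is cleaner and avoids any compactness theory.
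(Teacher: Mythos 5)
Your proof is correct, and it takes a genuinely different route from the paper. The paper argues by contradiction via soft compactness: Naber's Theorem 1.4 gives $\kappa$-noncollapsing of the associated self-similar Ricci flow, Hamilton's Cheeger--Gromov theorem extracts a limit ancient solution based at the points $p_i$, and the rescaled potentials $(f-f(p_i))/|\nabla f(p_i)|$ converge to a nonconstant function with vanishing Hessian, so the limit splits as $\mathbb{R}\times N^{n-1}$ and has $\lambda_1(p_\infty)=0$, contradicting $\lambda_1(p_i)\ge\delta$. You instead run a direct quantitative ODE argument along integral curves of $\nabla f$: bounded curvature plus Shi's global derivative estimates (applied to the complete bounded-curvature flow $(1-t)\varphi_t^*g$ on $[-1,0]$) give $|\nabla Ric|\le C$, which keeps $\lambda_1\ge\tfrac{3\epsilon_0}{4}$ on a segment of length $\tau_i\sim \epsilon_0 f(x_i)^{-1/2}$ in flow time, and integrating $\tfrac{d}{dt}S=2Ric(\nabla f,\nabla f)\ge \tfrac{3\epsilon_0}{2}(f-S)$ over that segment forces $S$ to jump by $\gtrsim \epsilon_0^2\sqrt{f(x_i)}\to\infty$, contradicting $0\le S\le \bar S$. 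All the estimates check out: $f$ increases along the forward curve so $f-S\ge \tfrac12 f(x_i)$ for large $i$, Gronwall gives $f\le e\,f(x_i)$ hence $|\dot\gamma_i|\le\sqrt{e\,f(x_i)}$ on $[0,1]\supset[0,\tau_i]$, and Weyl's inequality (after parallel transport) controls the drift of $\lambda_1$. What each approach buys: yours avoids noncollapsing and compactness theory entirely, using only Shi's estimates and the soliton identities, and it is effective --- it yields a decay rate $\lambda_1(x)\le C f(x)^{-1/4}$ rather than bare uniform convergence; the paper's softer argument requires no derivative estimates and simultaneously identifies the asymptotic splitting $\mathbb{R}\times N^{n-1}$, which the subsequent steps of the proof of Theorem \ref{thm2} use anyway (so the paper gets two things from one limit construction, as you note in your closing remark).
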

If Claim \ref{cla2} is false, then there exists a sequence of points
$p_i$ tending to infinity such that
\[
\lambda_1(p_i)\ge \delta
\]
for some constant $\delta>0$. From Theorem 1.4 in \cite{[Na]}, we know that
for the shrinker $(M^n,g,f)$ with bounded curvature, the associated
Ricci flow $(M^n, g(t), p_i)$ defined on $(-\infty, 1)$, where
\[
g(t):=(1-t)\phi^*_tg,\quad \frac{d\phi_t}{dt}=\frac{\nabla f}{1-t}
\quad  \text{and}\quad \phi_0=\mathrm{Id},
\]
is $\kappa$-noncollapsed, where $\kappa=\kappa(n, V_f(M))$. By Hamilton's
Cheeger-Gromov compactness theorem \cite{[Ham]}, the associated Ricci flow
sub-converges to an ancient and $\kappa$-noncollapsed solution
$(M^n_{\infty}, g_{\infty}(t), p_{\infty})$ . Since the curvature tensor is
bounded according to our assumption, by \eqref{equat} and Theorem \ref{pote},
we have that
\begin{equation}\label{lininfty}
|\nabla f(x)|\to \infty
\end{equation}
at the linear growth rate as $x\to \infty$. Now we consider a sequence of functions
\[
f_i(x):=\frac{f(x)-f(p_i)}{|\nabla f(p_i)|}.
\]
Then it satisfies
\[
|\nabla f_i(p_i)|=1 \quad\text{and} \quad
|\nabla^2 f_i|=\frac{|\tfrac{1}{2}g-Ric(g)|}{|\nabla f(p_i)|}.
\]
on $(M^n,g(t))$. Moreover, by \eqref{lininfty}, we know that
$|\nabla f(p_i)|\to\infty$ as $p_i\to\infty$. Combining this with the bounded
curvature assumption, we conclude that $|\nabla^2 f_i|\to 0$ uniformly as
$i\to\infty$. Hence along the convergence of $(M^n, g(t), p_i)$, the sequence
of functions $f_i(x)$ smoothly converges to a limit $f_\infty$ satisfying
\[
|\nabla f_\infty(p_{\infty})|=1\quad\text{and}\quad
\nabla^2 f_\infty\equiv 0
\]
on $(M^n_{\infty}, g_{\infty}(t))$. It is easy to see that
$(M^n_{\infty}, g_{\infty}(t), p_{\infty})$ isometrically splits
as $\mathbb{R}\times N^{n-1}$, where $N^{n-1}$ is an $(n-1)$-dimensional
ancient and $\kappa$-noncollapsed solution with nonnegative Ricci curvature.
In particular, we have $\lambda_1(p_{\infty})=0$. This contradicts
$\lambda_1(p_i)\ge \delta$ for some constant $\delta>0$ and hence
Claim \ref{cla2} is true.

We now continue to prove Theorem \ref{thm2}. By Theorem \ref{infi}, we know that
for any sequence of points $x_i\in M$ going to infinity along an integral curve
of $\nabla f$, $(M^n,g,x_i)$ smoothly converges to $\mathbb{R}\times N^{n-1}$,
where $N^{n-1}$ is another shrinker. According to our eigenvalues pinching condition  and
$Ric>0$, we have that the limit manifold $N^{n-1}$ must be an $(n-1)$-dimensional
nonflat Einstein manifold. Indeed we easily see that $\lambda_1=0$, $\lambda_i=1/2$
for any $2\le i\le n$ and $S=\frac{n-1}{2}$ on $\mathbb{R}\times N^{n-1}$.

On the other hand, since $S$ is bounded, by \eqref{equat} and Theorem \ref{pote},
we can choose some sufficiently large $R_1>0$ such that $|\nabla f|\neq 0$ on $M\setminus\overset{\circ}{D}(R_1)$.

Now for any point $y_0\in \Sigma(R_1)$, considering an integral curve
$\gamma_{y_0}(t)$, where $t\ge 0$, to $\nabla f$ with $\gamma_{y_0}(0)=y_0$,
the scalar curvature $S$ along the integral curve $\gamma_{y_0}(t)$ satisfies
\[
\frac{dS}{dt}(\gamma_{y_0}(t))=\langle\nabla S,\nabla f\rangle=2Ric(\nabla f,\nabla f)>0,
\]
on $M\setminus \overset{\circ}{D}(R_1)$, where we used $Ric>0$ and
$\nabla S=2Ric(\nabla f)$. So $S$ increases along each integral curve
of $\nabla f$ outside a compact set of $M^n$. This implies that
\[
S\le \frac{n-1}{2}\quad \text{and}\quad
\lambda_2\le\frac{1}{2}
\]
outside a compact set of $M^n$. Therefore, our   eigenvalues pinching
assumption $\lambda_n-\lambda_2\le A \lambda_1$ of theorem could imply
$\lambda_i-1/2\le A \lambda_1$ for any $2\le i\le n$ outside a compact
set of $M^n$.

From the above discussion, it is obvious that we can apply new  eigenvalues pinching
$(\lambda_i-1/2)\le A\lambda_1$ for any $2\leq i\le n$ outside a compact set
of $M^n$ (instead of the assumption of theorem $\lambda_n-\lambda_2\le A \lambda_1$)
to prove our theorem. We directly compute that
\begin{align*}
\frac 12\Delta_f S&=\frac 12S-|Ric|^2\\
&=\frac 12\sum^n_{i=1}\lambda_i-\sum^n_{i=1}\lambda_i^2\\
&=\sum^n_{i=1}\left(\frac 12-\lambda_i\right)\lambda_i\\
&=-\sum^n_{i=2}\left(\frac 12-\lambda_i\right)^2+\frac 12\sum^n_{i=2}\left(\frac 12-\lambda_i\right)
+\left(\frac 12-\lambda_1\right)\lambda_1\\
&\ge-(n-1)A^2 \lambda_1^2 +\frac 12\left(\frac{n-1}{2}-\sum_{i=2}^n\lambda_i\right)
+\left(\frac 12-\lambda_1\right)\lambda_1
\end{align*}
outside a compact set of $M^n$, where we used  eigenvalues pinching
$(\lambda_i-1/2)\le A \lambda_1$ in the last inequality. Since
$\lambda_1(x)$ converges to zero uniformly at infinity and
$S\le\frac{n-1}{2}$ outside a compact set of $M^n$, then
\[
-(n-1)A^2 \lambda_1^2\ge-\frac{1}{4}\lambda_1,\quad
\frac{n-1}{2}-\sum_{i=2}^n\lambda_i\ge\lambda_1\quad\text{and}\quad
\frac 12-\lambda_1>\frac 13
\]
outside a sufficiently large compact set of $M^n$. Substituting these estimates into
the above inequality we finally get
\begin{equation}\label{Sext}
\frac 12\Delta_f S\ge-\frac{1}{4}\lambda_1+\frac 12\lambda_1+\frac 13\lambda_1>0
\end{equation}
outside a sufficiently large compact set of $M^n$.

On the other hand, we may assume $K=D(a)$  for some sufficiently large $a$
such that $\nabla f\neq 0$ on $M\setminus D(a)$. As in \cite{[MW19]},
on $M\setminus D(a)$, using \eqref{Sext}, the Stokes theorem and $V_f(M)<\infty$,
we have
\begin{align*}
0&<\int_{M\setminus D(a)}\Delta_f S\cdot e^{-f}dv\\
&=-\int_{\Sigma(a)}\langle\nabla S, \tfrac{\nabla f}{|\nabla f|}\rangle\cdot e^{-f}dv\\
&=-2\int_{\Sigma(a)}Ric\left(\nabla f, \tfrac{\nabla f}{|\nabla f|}\right)\cdot e^{-f}dv\\
&< 0.
\end{align*}
This is a contradiction and hence $(M,g,f)$ is compact.
\end{proof}

\begin{remark}
We would like to point out that Theorem \ref{thm2} still holds under a
weaker assumption
\[
(\lambda_n-\lambda_2)^{\delta}\le A \lambda_1
\]
for some constant $1\le \delta<2$. Since the proof is almost the same as the
Theorem \ref{thm2}, we omit the proof here.
\end{remark}

Next, we will apply Theorem \ref{thm2} to get the following classification
which was stated as Corollary \ref{cor1b} in the introduction.
\begin{proof}[Proof of Corollary \ref{cor1b}]
Without loss of generality we can assume $(M, g, f)$ is simply connected.
By \eqref{evoRic} and   eigenvalues pinching \eqref{pinmis}, we have
\begin{align*}
\Delta_f \lambda_1&\le\lambda_1-2\sum_{i=2}^nK_{1i}\lambda_i\\
&=\lambda_1-2\sum_{i=2}^n K_{1i}\cdot\lambda_2-2\sum_{i=2}^n K_{1i}(\lambda_i-\lambda_2)\\
&\le\lambda_1-2\lambda_1 \lambda_2+ C A \lambda_1\\
&\le C' A \lambda_1
\end{align*}
in the barrier sense, where $C$ and $C'$ are constants depending only on the
curvature bound. Since the Ricci curvature is nonnegative, by
the strong maximum principle to the above inequality, we only have the following
two cases.

\emph{Case 1}: $\lambda_1>0$. Then $(M, g)$ is compact by Theorem \ref{thm2}.

\emph{Case 2}: $\lambda_1\equiv0$. Then there exists two possibilities
according to our  eigenvalues pinching condition. One is $\lambda_i=0$ for any $2\le i\le n$,
the other is $\lambda_i=\frac{1}{2}$ for any $2\le i\le n$. The former
case is of course $\mathbb{R}^n$. The latter case is $\mathbb{R}\times N^{n-1}$,
where $N^{n-1}$ is an $(n-1)$-dimensional Einstein manifold with Einstein constant
$\frac{1}{2}$; see Corollary 2 in \cite{[PW]}.

Putting these cases together, the corollary follows.
\end{proof}

\section{Shrinkers with   eigenvalues pinching condition II}\label{pinchII}
In this section, adapting the argument of preceding sections, we continue to discuss
some pinching conditions such that shrinkers with positive Ricci curvature are compact.
In particular, we will prove Theorem \ref{thmpicn} and Corollary \ref{cor2} in the
introduction.

Recall that, in \cite{[MW19]}, Munteanu-Wang proved any real $4$-dimensional
K\"ahler shrinker with positive bounded Ricci curvature must be compact. Here
we shall give a generalization using   eigenvalues pinching condition. Denoted by
$\lambda_1\le\lambda_2\le\lambda_3\le\lambda_4$ eigenvalues of Ricci
curvature in a $4$-dimensional shrinker $(M,g,f)$.
\begin{theorem}\label{twopin}
For any $4$-dimensional shrinker $(M,g,f)$ with positive Ricci curvature
and bounded scalar curvature, if there exists a positive constant $A$
such that
\[
\lambda_2\le A \lambda_1
\quad\text{and}\quad
\lambda_4-\lambda_3\le A \lambda_1,
\]
then $(M, g, f)$ is compact.
\end{theorem}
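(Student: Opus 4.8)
The plan is to argue by contradiction, following the template established for Theorems \ref{thm1} and \ref{thm2}, and to exploit the fact that in dimension $4$ a bounded scalar curvature forces bounded Riemannian curvature (Theorem \ref{RmdyS}). So suppose $(M,g,f)$ is complete and noncompact. Since the curvature is bounded, Naber's structure theorem (Theorem \ref{infi}) applies: along any integral curve of $\nabla f$ going to infinity, a subsequence of $(M,g,x_i)$ converges smoothly to a splitting $\mathbb{R}\times N^3$, where $N^3$ is a $3$-dimensional shrinker. Moreover, as in the proof of Theorem \ref{thm2}, Claim \ref{cla2} gives that the minimal eigenvalue $\lambda_1(x)$ of $Ric$ tends to zero uniformly at infinity, and since $S$ strictly increases along integral curves of $\nabla f$ outside a compact set (because $dS/dt = 2Ric(\nabla f,\nabla f)>0$), the scalar curvature has a limiting value; on $\mathbb{R}\times N^3$ one has $\lambda_1=0$, so $S=\lambda_2+\lambda_3+\lambda_4$ there.

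The key point is to identify the limit $N^3$. On the limit $\mathbb{R}\times N^3$ we have $\lambda_1=0$, hence the pinching $\lambda_2\le A\lambda_1$ forces $\lambda_2=0$ as well; thus the Ricci curvature of $N^3$ has at most one nonzero eigenvalue. But a $3$-dimensional shrinker (in particular a $3$-dimensional Einstein or quasi-Einstein metric arising here) cannot have Ricci curvature with exactly one nonzero eigenvalue unless it is flat — since $N^3$ is a shrinker, if two of its Ricci eigenvalues vanish then by de Rham splitting $N^3$ further splits off Euclidean factors and the remaining piece is a $1$-dimensional shrinker, which is flat, so $N^3$ is flat. Therefore the limit is $\mathbb{R}^4$ and $S\to 0$ at infinity. (Alternatively, one notes that $\lambda_2\le A\lambda_1$ together with $\lambda_1\to 0$ forces $\lambda_2\to 0$ uniformly, so $S=\lambda_1+\lambda_2+\lambda_3+\lambda_4 \approx \lambda_3+\lambda_4$ at infinity.) The remaining pinching $\lambda_4-\lambda_3\le A\lambda_1$ then makes $\lambda_3$ and $\lambda_4$ comparable at infinity, and I will feed all of this into the $f$-Laplacian computation for $S$ exactly as in Theorem \ref{thm2}: writing
\[
\tfrac12\Delta_f S=\sum_{i=1}^4\Big(\tfrac12-\lambda_i\Big)\lambda_i,
\]
I estimate the right-hand side from below using $S\to 0$ (so each $\lambda_i\le \tfrac12$, in fact $\lambda_i\to 0$, at infinity), $\lambda_2\le A\lambda_1$, $\lambda_4-\lambda_3\le A\lambda_1$, and $\lambda_1\to 0$ uniformly, to conclude $\Delta_f S>0$ outside a sufficiently large compact set.

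Once $\Delta_f S>0$ outside a compact set $K=D(a)$ with $\nabla f\neq 0$ on $M\setminus D(a)$, the contradiction is obtained verbatim from the end of the proof of Theorem \ref{thm2}: integrating against $e^{-f}\,dv$ over $M\setminus D(a)$, using $V_f(M)<\infty$ and Stokes' theorem,
\[
0<\int_{M\setminus D(a)}\Delta_f S\cdot e^{-f}\,dv
=-2\int_{\Sigma(a)}Ric\Big(\nabla f,\tfrac{\nabla f}{|\nabla f|}\Big)e^{-f}\,dv<0,
\]
where positivity of $Ric$ gives the last strict inequality. This contradiction shows $(M,g,f)$ must be compact.

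The main obstacle I anticipate is making the limiting analysis of $S$ genuinely rigorous: one needs that the pinching conditions, which are hypotheses on $(M,g,f)$, pass to appropriate uniform bounds \emph{near infinity} (not just in the smooth limit), so that the differential inequality $\Delta_f S>0$ holds on an actual neighborhood of infinity rather than merely on the limit space. Concretely, the delicate step is combining "$\lambda_1\to 0$ uniformly" (Claim \ref{cla2}) with "$\lambda_2\le A\lambda_1$" and the monotonicity of $S$ along $\nabla f$ to deduce $S\to 0$ uniformly at infinity; the rest is the same bookkeeping in the $\Delta_f S$ inequality as in Theorem \ref{thm2}, with the two pinching inputs replacing the single input $\lambda_i-\tfrac12\le A\lambda_1$ used there.
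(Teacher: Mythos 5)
There is a genuine gap in the identification of the limit at infinity. You claim that on the limit $\mathbb{R}\times N^3$ the pinching $\lambda_2\le A\lambda_1$ forces ``the Ricci curvature of $N^3$ to have at most one nonzero eigenvalue,'' and hence that $N^3$ is flat and $S\to 0$. But the eigenvalues of $Ric$ on $\mathbb{R}\times N^3$ are $\lambda_1=0$ (the $\mathbb{R}$-direction) together with the three eigenvalues of $N^3$; the pinching only kills $\lambda_2$, so $N^3$ has Ricci eigenvalues $0,\lambda_3,\lambda_4$ --- exactly \emph{one} zero eigenvalue, not two. De Rham splitting then gives $N^3=(\mathbb{R}\times\mathbb{S}^2)/\Gamma$ as a perfectly admissible limit, with $\lambda_3=\lambda_4=\tfrac12$ and $S\to 1$ (not $0$) at infinity. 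In fact $S\to 0$ is impossible anyway, since $S$ is strictly increasing along integral curves of $\nabla f$; and if the limit really were forced to be flat, the hypothesis $\lambda_4-\lambda_3\le A\lambda_1$ would never be used --- a warning sign. Your concluding estimate, which rests on ``$S\to 0$, so each $\lambda_i<\tfrac12$, hence $\sum(\tfrac12-\lambda_i)\lambda_i>0$,'' therefore addresses a case that does not occur and misses the one that does.

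The paper's proof treats precisely this cylinder case: with $\lambda_1\to 0$ uniformly and $S\le 1$ near infinity (monotonicity of $S$ along $\nabla f$ plus $S\to 1$ on the limit), it estimates
\[
\Delta_f S=S-2|Ric|^2\ge S^2-2|Ric|^2\ge 2\lambda_1\lambda_4-(\lambda_4-\lambda_3)^2\ge 2\lambda_1\lambda_4-A^2\lambda_1^2>0
\]
outside a compact set, using $\lambda_4\ge\tfrac13$ and $A^2\lambda_1\le\tfrac14$ there; this is where the second pinching condition enters. Your overall framework (contradiction, Naber's structure theorem, Claim \ref{cla2}, exclusion of $\mathbb{S}^3/\Gamma$ via $\lambda_2\le A\lambda_1$, and the final Stokes integration) matches the paper, but the central estimate on $\Delta_f S$ in the surviving case is missing and must be supplied along the lines above.
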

\begin{remark}
For any real $4$-dimensional K\"ahler shrinker with positive Ricci curvature,
the above eigenvalues pinching condition automatically holds, because
we have $\lambda_1=\lambda_2$ and $\lambda_3=\lambda_4$ on such shrinker.
\end{remark}

\begin{proof}[Proof of Theorem \ref{twopin}]
Assume now by contradiction that the shrinker $(M, g, f)$ is complete
and noncompact with bounded scalar curvature. Similar to the argument
of Theorem \ref{thm2}, we still have that $\lambda_1$ tends to zero
uniformly at infinity. Then there are two possible cases for the
structure at infinity of $(M, g, f)$.

\emph{Case 1}: $(M^4, g, f)$ converges to the limit manifold
$\mathbb{R}\times N^3$ along each integral curve of $\nabla f$, where
$N^3$ is a compact shrinker and hence $\mathbb{S}^3/\Gamma$. It
is impossible due to our   eigenvalues  pinching $\lambda_2\le A \lambda_1$.

\emph{Case 2}: $(M^4, g, f)$ converges to $\mathbb{R}\times N^3$
along each integral curve of $\nabla f$, where $N^3$ is a noncompact
shrinker and hence $N^3$ is a quotient of $\mathbb{R}\times \mathbb{S}^2$.
Meanwhile the scalar curvature $S$ converges to $1$ at infinity.
Moreover, we have
\[
\lambda_3=\lambda_4=\frac 12
\]
on $\mathbb{R}\times N^3$. Since $Ric>0$, from the proceeding discussion, we know that
$S$ increases along an integral curve of $\nabla f$ outside a compact
set of $M$. Hence there exists a compact set $K\subset M$ so that $S\le 1$
on $M\setminus K$, where $K$ contains all critical points of $f$.

Also, using the conditions $0<\lambda_1\le\lambda_2\le\lambda_3\le\lambda_4$ and
$\lambda_4-\lambda_3\le A \lambda_1$, on $M\setminus K$, by \eqref{Sequat}
we compute that
\begin{align*}
\Delta_f S&= S-2|Ric|^2\\
&\geq S^2-2 |Ric|^2\\
&=(\lambda_1+\lambda_2+\lambda_3+\lambda_4)^2-2(\lambda_1^2+\lambda_2^2+\lambda_3^2+\lambda_4^2)\\
&=   2\lambda_1\lambda_2+ 2\lambda_1\lambda_3+2\lambda_1\lambda_4+2\lambda_2\lambda_3+2\lambda_2\lambda_4 +2\lambda_3\lambda_4-(\lambda_1^2+\lambda_2^2+\lambda_3^2+\lambda_4^2)\\
&\ge 2 \lambda_1\lambda_4-(\lambda_4-\lambda_3)^2\\
&\ge 2\lambda_1\lambda_4-A^2 \lambda_1^2.
\end{align*}
In the above computation, we can assume that $\lambda_4\ge\frac 13$ and
$A^2\lambda_1\le\frac 14$ on $M\setminus K$. Hence we further get
\[
\Delta_f S\ge\left(\frac 23-\frac 14\right)\lambda_1=\frac{5}{12}\lambda_1>0
\]
on $M\setminus K$. The rest of the proof is the same as the case of Theorem \ref{thm2}.
\end{proof}

In the rest of this section, we will prove Theorem \ref{thmpicn} and Corollary
\ref{cor2} in the introduction. We remark that Qu-Wu \cite{[QW]} proved the
same results under more assumption condition.

\begin{proof}[Proof of Theorem \ref{thmpicn}]
The theorem is proved by contradiction. We assume that the shrinker $(M^4, g, f)$
is complete and noncompact. Let $p\in M$ be a minimum point of $f$.
Similar to the argument of Section \ref{2thRic}, we have
\begin{equation}\label{eigenevo}
\Delta_f \lambda_1\le \lambda_1-2(K_{12}\lambda_2+K_{13}\lambda_3+K_{14}\lambda_4)
\end{equation}
in the sense of barriers, where
\begin{equation}
\begin{aligned}\label{eigendef}
\lambda_1&:=K_{12}+K_{13}+K_{14},\\
\lambda_2&:=K_{21}+K_{23}+K_{24},\\
\lambda_3&:=K_{31}+K_{32}+K_{34},\\
\lambda_4&:=K_{41}+K_{42}+K_{43}.
\end{aligned}
\end{equation}
Here $0<\lambda_1\le \lambda_2\le\lambda_3\le \lambda_4$ are eigenvalues of
positive Ricci curvature $R_{ij}$ and $K_{ij}$ denotes the sectional
curvature of the plane spanned by $e_i$ and $e_j$.

For the $4$-dimensional shrinker, by Theorem \ref{RmdyS}, the bounded
scalar curvature implies the Riemmanian curvature is bounded. Since
$Ric>0$, the shrinker $(M,g,f)$ has only one end. Therefore we can apply
Theorem \ref{infi} to conclude that there exists a sequence of points
$x_i\in M$ tending to infinity along an integral curve of $\nabla f$, such
that $(M,g,x_i)$ smoothly converges to $\mathbb{R}\times N^3$, where $N^3$ is
a $3$-dimensional shrinker.

\emph{Case 1}: $N^3=\mathbb{S}^3/\Gamma$. In this case, Munteanu-Wang \cite{[MW19]}
proved that $(M, g)$ converges to $\mathbb{R}\times \mathbb{S}^3/\Gamma$
uniformly. So if $d(x, p)>r_0$ for some sufficiently large constant $r_0>0$,
then we must have
\begin{equation}\label{estim}
A\big(|\varepsilon_2|+|\varepsilon_3|+|\varepsilon_4|\big)<\frac{1}{2},
\end{equation}
where $\varepsilon_i:=\lambda_i-\frac{1}{2}$ for $i=2,3,4$. Then we have
the following claim.
\begin{claim}\label{cla3}
$2(K_{12}\lambda_2+K_{13}\lambda_3+K_{14}\lambda_4)>0$
outside a sufficiently large compact set of $M^4$.
\end{claim}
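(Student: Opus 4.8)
The plan is to prove the claim quantitatively rather than by a naive limiting argument, because the quantity in question itself tends to $0$ at infinity (on $\mathbb{R}\times\mathbb{S}^3/\Gamma$ one has $K_{1i}\to 0$ and $\lambda_i\to\tfrac12$), so one cannot simply pass to the limit. The two inputs I would use are the uniform convergence to $\mathbb{R}\times\mathbb{S}^3/\Gamma$ of Munteanu--Wang \cite{[MW19]}, which has already been recorded in the form \eqref{estim}, and the pinching hypothesis \eqref{picncon}.

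First I would apply \eqref{picncon} with $u=e_1$ and $v=e_i$. Since $e_1,\dots,e_4$ are orthonormal eigenvectors of $Ric$, each pair $(e_1,e_i)$ is orthonormal, and $Ric(e_1,e_1)=\lambda_1$, so \eqref{picncon} gives
\[
|K_{1i}|=|Rm(e_1,e_i,e_1,e_i)|\le A\,\lambda_1,\qquad i=2,3,4.
\]
Next, writing $\lambda_i=\tfrac12+\varepsilon_i$ for $i=2,3,4$ and using $\lambda_1=K_{12}+K_{13}+K_{14}$ from \eqref{eigendef}, I would rearrange
\[
2\big(K_{12}\lambda_2+K_{13}\lambda_3+K_{14}\lambda_4\big)
=\lambda_1+2\big(K_{12}\varepsilon_2+K_{13}\varepsilon_3+K_{14}\varepsilon_4\big),
\]
which isolates $\lambda_1$ as the leading term.

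Then the error term is controlled by the first step,
\[
\big|2(K_{12}\varepsilon_2+K_{13}\varepsilon_3+K_{14}\varepsilon_4)\big|
\le 2A\,\lambda_1\big(|\varepsilon_2|+|\varepsilon_3|+|\varepsilon_4|\big),
\]
so that
\[
2\big(K_{12}\lambda_2+K_{13}\lambda_3+K_{14}\lambda_4\big)
\ge \lambda_1\Big(1-2A\big(|\varepsilon_2|+|\varepsilon_3|+|\varepsilon_4|\big)\Big).
\]
Finally I would invoke \eqref{estim}, which says precisely that $A(|\varepsilon_2|+|\varepsilon_3|+|\varepsilon_4|)<\tfrac12$ once $d(x,p)>r_0$; hence the parenthesis is strictly positive, and since $Ric>0$ forces $\lambda_1>0$, the claim follows on $M^4\setminus B_p(r_0)$.

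There is no serious obstacle here: the only subtlety is recognizing that one must not take the limit directly but instead perform the algebraic rearrangement that makes $\lambda_1=\sum_{i}K_{1i}$ the main term, after which condition \eqref{picncon}---whose whole purpose is to make $|K_{1i}|$ comparable to $\lambda_1$---lets one absorb the remaining terms as a small fraction of that main term.
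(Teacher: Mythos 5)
Your argument is correct and is essentially identical to the paper's own proof: the same decomposition $2\sum_i K_{1i}\lambda_i=\lambda_1+2\sum_i K_{1i}\varepsilon_i$ via $\lambda_1=K_{12}+K_{13}+K_{14}$, the same use of \eqref{picncon} to bound $|K_{1i}|\le A\lambda_1$, and the same invocation of \eqref{estim} to make the error term less than half of $\lambda_1$.
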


To confirm this claim, at $x\in M\setminus \overline{B(p, r_0)}$,
under the curvature assumption \eqref{picncon} and \eqref{estim},
we obtain
\begin{align*}
2(K_{12}\lambda_2+K_{13}\lambda_3+K_{14})\lambda_4
&=2K_{12}\left(\frac{1}{2}+\varepsilon_2\right)
+2K_{13}\left(\frac{1}{2}+\varepsilon_3\right)
+2K_{14}\left(\frac{1}{2}+\varepsilon_4\right)\\
&=\lambda_1+2 K_{12}\varepsilon_2+2 K_{13}\varepsilon_3+2 K_{14}\varepsilon_4\\
&\ge\lambda_1-2 A\cdot\lambda_1\big(|\varepsilon_2|+|\varepsilon_3|+|\varepsilon_4|\big)\\
&=\lambda_1\left[1-2 A\cdot\big(|\varepsilon_2|+|\varepsilon_3|+|\varepsilon_4|\big)\right]\\
&>0,
\end{align*}
which proves Claim \ref{cla3}.

Combining \eqref{eigenevo} with Claim \ref{cla3} yields
\[
\Delta_f\lambda_1\le \lambda_1
\]
in $M\setminus \overline{B(p, r_0)}$, in the sense of barriers. In the rest
we can follow the argument of Theorem \ref{thm1} and prove $M^4$ is compact.
This contradicts our assumption.

\emph{Case 2}: $N^3=(\mathbb{R}\times \mathbb{S}^2)/\Gamma$. In this case,
by \eqref{eigendef} and the assumption \eqref{picncon}, we have
\begin{align*}
\lambda_4-\lambda_3&=(K_{41}-K_{31})+(K_{42}-K_{32})\\
&\le A \lambda_2+A \lambda_2\\
&=2 A \lambda_2.
\end{align*}
This condition is the same as Case 2 of Theorem \ref{twopin}.
We can completely follow the argument of Case 2 of Theorem \ref{twopin}
to get that $M^4$ is compact, which is also a contradiction.
Indeed, the condition $\lambda_2\le A \lambda_1$ of Theorem \ref{twopin}
is unnecessary in the proof of the compact theorem for the case
$N^3=(\mathbb{R}\times \mathbb{S}^2)/\Gamma$.
\end{proof}

In the following, similar to the argument of Corollary \ref{cor1b},
we will apply Theorem \ref{thmpicn} to prove Corollary \ref{cor2}
in the introduction.
\begin{proof}[Proof of Corollary \ref{cor2}]
We may assume $(M^4, g, f)$ is simply connected without loss of generality.
By \eqref{evoRic} and    curvature pinching \eqref{picncon}, we have
\begin{align*}
\Delta_f \lambda_1&\le\lambda_1-2(K_{12}\lambda_2+K_{13}\lambda_3+K_{14}\lambda_4)\\
&\le\lambda_1+2A\lambda_1\lambda_2+2A\lambda_1\lambda_3+2A\lambda_1\lambda_4\\
&=\lambda_1+2A\lambda_1(S-\lambda_1)\\
&\le C\lambda_1
\end{align*}
in the barrier sense, where $C$ is a constant depending on $A$ and the
scalar curvature bound. Since the Ricci curvature is nonnegative, we apply
the strong maximum principle to the above inequality and conclude that
$\lambda_1>0$ or $\lambda_1\equiv0$.

\emph{Case 1}: $\lambda_1>0$. In this case, $(M, g)$ is compact by Theorem
\ref{thmpicn}.

\emph{Case 2}: $\lambda_1\equiv0$. By the maximum principle of \cite{[PW]}
and the de Rham splitting theorem, we have $M^4=\mathbb{R}\times N^3$, where
$N^3$ is a shrinker. Since any complete $3$-dimensional shrinkers have been
classified (see {[CCZ]}), then $N^3$ is isometric to $\mathbb{S}^3$,
$\mathbb{R}\times\mathbb{S}^2$, $\mathbb{R}^3$.

To sum up, Corollary \ref{cor2} follows.
\end{proof}

\section{Shrinkers with asymptotically nonnegative curvature}\label{anc}

In this section, we will discuss the compact property of shrinkers with certain
asymptotically nonnegative curvature operator. We observe that if more
restriction of Ricci curvature is imposed, we are able to prove the shrinkers are compact under some asymptotically nonnegative sectional curvature assumption.

Use the same notations in Section \ref{2thRic}. Recall that
\[
K_{e_1}:=\inf_{2\le i\le n}{K_{1i}},
\]
where $K_{ij}$ denotes the sectional curvature of the plane spanned
by $e_i$ and $e_j$.
\begin{theorem}\label{asynoRic}
For any $n$-dimensional shrinker $(M,g,f)$, if
\[
S\cdot K_{e_1}\ge-\frac{c_1}{f^2}\quad\text{and}\quad
Ric\ge\frac{c_2}{f^{2-\epsilon}}
\]
for any $x\in M$, where $0<\epsilon<1$, $c_1$ and $c_2$
are fixed positive constants, then $(M,g,f)$ is compact.
\end{theorem}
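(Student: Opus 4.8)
The plan is to run the barrier argument from the proof of Theorem~\ref{thm1}, the one new feature being an extra error term of order $f^{-2}$ in the differential inequality for the least Ricci eigenvalue; this is absorbed using the supersolution estimates \eqref{basic} together with the a priori bound $Ric\ge c_2 f^{-(2-\epsilon)}$, and the margin $\epsilon>0$ is exactly what makes the absorption work. Assume for contradiction that $(M,g,f)$ is noncompact. Since $Ric\ge c_2 f^{-(2-\epsilon)}>0$ the shrinker is non-Gaussian, so there is a minimum point $p$ of $f$ with $f>0$, and at each $x\in M$ we write $\lambda_1=\lambda_1(x)>0$ for the least eigenvalue of $Ric$. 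As in Section~\ref{2thRic}, diagonalizing $Ric$ at $x$ by an orthonormal basis $\{e_1,\dots,e_n\}$ with $e_1$ realizing $\lambda_1$, equation \eqref{evo} yields $\Delta_f\lambda_1\le\lambda_1-2\sum_{i=2}^n K_{1i}\lambda_i$ in the barrier sense. First I would bound the curvature term below. If $K_{e_1}\ge0$ then every $K_{1i}\ge0$ and the sum is nonnegative; if $K_{e_1}<0$, then discarding the nonnegative summands, using $K_{1i}\ge K_{e_1}$ for the at most $n-1$ negative ones, and $\lambda_i\le\lambda_n\le S$, one gets
\[
\sum_{i=2}^n K_{1i}\lambda_i\ \ge\ (n-1)\,\lambda_n\,K_{e_1}\ \ge\ (n-1)\,S\,K_{e_1}\ \ge\ -\frac{(n-1)c_1}{f^{2}},
\]
the last step being the hypothesis $S\cdot K_{e_1}\ge-c_1 f^{-2}$. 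Hence $\Delta_f\lambda_1\le\lambda_1+2(n-1)c_1 f^{-2}$ on $M$ in the barrier sense.

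Next I would apply the maximum principle of \cite{[MW]} on $M\setminus B_p(R_1)$ with $R_1$ large. By \eqref{basic}, for positive constants $a,b$ the function $u:=\lambda_1-af^{-1}-bf^{-2}$ satisfies
\[
\Delta_f u\ \le\ \lambda_1+\frac{2(n-1)c_1}{f^{2}}-a\Big(f^{-1}-\tfrac n2 f^{-2}\Big)-\tfrac{3b}{2}f^{-2}\ \le\ u
\]
on $M\setminus B_p(R_1)$ provided $b\ge 4(n-1)c_1+na$. Fix $R_1$ large, set $a:=\min_{\partial B_p(R_1)}\lambda_1>0$ and $b:=4(n-1)c_1+na$. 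This is where the strength of the hypothesis is used: by Theorem~\ref{pote}, on $\partial B_p(R_1)$ one has $f$ of order at least $R_1^{2}$, while $\lambda_1\ge c_2 f^{-(2-\epsilon)}$ forces $a$ to be of order at least $R_1^{-(4-2\epsilon)}$, so that $a^{-1}f^{-2}$ is of order $R_1^{-2\epsilon}\to0$; consequently $af^{-1}+bf^{-2}<a\le\lambda_1$ on $\partial B_p(R_1)$ once $R_1$ is large enough, a step which fails for $\epsilon=0$. Thus $u>0$ on $\partial B_p(R_1)$, while $\liminf_{x\to\infty}u\ge0$ because $\lambda_1\ge0$ and $af^{-1}+bf^{-2}\to0$. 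The maximum principle then gives $u\ge0$, i.e.\ $\lambda_1\ge af^{-1}$ on $M\setminus B_p(R_1)$; combined with $Ric>0$ on the compact set $\overline{B_p(R_1)}$, this yields $Ric\ge b_0 f^{-1}$ on all of $M$ for some $0<b_0<1$.

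From this Ricci lower bound the conclusion is reached exactly as in the proof of Theorem~\ref{thm1}: along each integral curve of $\nabla f$ one has $\tfrac{dS}{df}=\tfrac{2Ric(\nabla f,\nabla f)}{|\nabla f|^{2}}\ge 2b_0 f^{-1}$, so $S\ge b_0\ln f$ outside a compact set, hence $S\ge n$ on $M\setminus B_p(R_2)$ for $R_2$ large; together with $\int_{B_p(r)}S\,dv\le\tfrac n2 V(B_p(r))$ from \cite{[CZ]} and the linear lower volume growth $V(B_p(r))\ge c_1 r$, this is impossible for large $r$, a contradiction. The main obstacle is the second display above: one must verify that the new $f^{-2}$ curvature error does not overwhelm the subsolution barrier, and this is precisely why the hypothesis is stated with $Ric\ge c_2 f^{-(2-\epsilon)}$, $\epsilon>0$, rather than merely $Ric>0$ --- the $\epsilon$ margin is exactly what is consumed in keeping $u$ positive on $\partial B_p(R_1)$.
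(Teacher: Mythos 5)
Your proposal is correct and follows essentially the same route as the paper: the same barrier function $\lambda_1-af^{-1}-bf^{-2}$, the same use of the hypothesis $S\cdot K_{e_1}\ge -c_1f^{-2}$ to turn \eqref{evoRic} into $\Delta_f\lambda_1\le\lambda_1+Cf^{-2}$, the same use of $Ric\ge c_2f^{-(2-\epsilon)}$ to secure positivity of $u$ on $\partial B_p(R_1)$, and the same Munteanu--Wang endgame. The only cosmetic difference is that the paper bounds the curvature term via $K_{e_1}\sum_{i\ge2}\lambda_i=K_{e_1}(S-\lambda_1)\ge -c_1f^{-2}$, getting the constant $2c_1$ where your term-by-term estimate gives $2(n-1)c_1$; both suffice.
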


\begin{remark}
Munteanu-Wang \cite{[MW]} showed that any shrinker with positive Ricci curvature
and nonnegative sectional curvature must be compact. Theorem \ref{asynoRic} allows
sectional curvature to be asymptotically nonnegative and hence it generalizes
their result in some sense.
\end{remark}

\begin{proof}[Proof of Theorem \ref{asynoRic}]
The proof is contradiction. Assume $(M, g, f)$ is complete noncompact
and it satisfies $S\cdot K_{e_1}\ge-\frac{c_1}{f^2}$ and
$Ric\ge\frac{c_2}{f^{2-\epsilon}}$. Combining these constrictions and
\eqref{evoRic}, we have
\begin{equation}
\begin{aligned}\label{extratwo}
\Delta_f\lambda_1&\le\lambda_1-2(K_{12}\lambda_2+K_{13}\lambda_3+\ldots+K_{1n}\lambda_n)\\
&\le\lambda_1+\frac{2c_1}{S\cdot f^2}(\lambda_2+\lambda_3+\ldots+\lambda_n)\\
&=\lambda_1+\frac{2c_1}{S\cdot f^2}(S-\lambda_1)\\
&\le\lambda_1+2c_1\,f^{-2}
\end{aligned}
\end{equation}
in the barrier sense. Consider the function
\[
u:=\lambda_1-af^{-1}-2nc_1 f^{-2},
\]
where $a:=\inf_{\partial B_p(r_0)}\lambda_1>0$. Here $p\in M$ is a minimum point of $f$
and $r_0>0$ is a large number determined later. On $M\setminus B_p(r_0)$,
by \eqref{basic} and \eqref{extratwo}, we have
\begin{equation*}
\begin{aligned}
\Delta_fu&\le\lambda_1+2af^{-2}-af^{-1}+\frac n2 af^{-2}-3 nc_1f^{-2}\\
&=u+\left(2c_1+\frac{n}{2}a-nc_1\right)f^{-2}
\end{aligned}
\end{equation*}
on $M\setminus B_p(r_0)$. Choosing a sufficiently large $r_0$, we have
\[
2c_1+\frac{n}{2}a-nc_1<0
\]
because $a\to 0+$ as $r_0\to\infty$. Thus,
$\Delta_fu\le u$ on $M\setminus B_p(r_0)$.

On the other hand, since $Ric\ge\frac{c_2}{f^{2-\epsilon}}$,
by the definition of $u$, we may choose $r_0$ large enough such that
\begin{align*}
u&\ge a(1-f^{-1})-2nc_1 f^{-2}\\
&\ge \frac{c_2}{f^{2-\epsilon}}\cdot\frac{1}{2}-2nc_1 f^{-2}\\
&>0
\end{align*}
on $\partial B_p(r_0)$, where $0<\epsilon<1$.

In conclusion, for a sufficiently large $r_0>0$, we show that
\[
\Delta_fu\le u\quad \text{on} \quad M\setminus B_p(r_0),
\]
and
\[
u>0 \quad \text{on} \quad \partial B_p(r_0).
\]
Then the rest of our argument is
the same as the proof of Theorem \ref{thm1}.
\end{proof}

At the end of this section, we give an asymptotical estimate of
certain curvature quantity for complete noncompact shrinkers.
\begin{proposition}\label{asynon}
For any $n$-dimensional complete noncompact shrinker $(M,g,f)$ with positive
Ricci curvature, there exists a point $x_0\in M$ such that
\[
S(x_0)\cdot K_{e_1}(x_0)<-a\,r^{-4}(x_0),
\]
where $r(x_0)$ is a distance function from point $p\in M$ (a minimum point of $f$)
to $x_0$, and $a:=\inf_{\partial B_p(2022n)}\lambda_1$.
\end{proposition}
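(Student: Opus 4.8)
The statement is the contrapositive flavor of Theorem \ref{asynoRic}: it says that on a complete noncompact shrinker with $Ric>0$, the asymptotically-nonnegative curvature condition $S\cdot K_{e_1}\ge -c_1 f^{-2}$ must fail at \emph{some} point, and moreover fail at the sharp rate $r^{-4}$ with the constant $a$ determined from $\lambda_1$ on the fixed sphere $\partial B_p(2022n)$. The natural approach is by contradiction: suppose instead that $S(x)\cdot K_{e_1}(x)\ge -a\,r^{-4}(x)$ for \emph{every} $x\in M$ (where now $a=\inf_{\partial B_p(2022n)}\lambda_1$), and derive that $M$ must be compact, contradicting the hypothesis.

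The plan is to rerun the differential-inequality argument from the proof of Theorem \ref{asynoRic}, but tracking the constant carefully so that $r_0=2022n$ actually works rather than merely ``some sufficiently large $r_0$''. First I would use Theorem \ref{pote} to convert $r^{-4}$-type bounds into $f^{-2}$-type bounds: since $\tfrac14[(r-5n)_+]^2\le f\le \tfrac14(r+\sqrt{2n})^2$, on $M\setminus B_p(2022n)$ one has $f$ comparable to $r^2$ with explicit constants, so the hypothesis $S\cdot K_{e_1}\ge -a\,r^{-4}$ yields $S\cdot K_{e_1}\ge -C\,a\,f^{-2}$ for a computable $C$ depending only on $n$. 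Then, exactly as in \eqref{extratwo}, from \eqref{evoRic} and $\lambda_2+\cdots+\lambda_n=S-\lambda_1\le S$ one gets $\Delta_f\lambda_1\le \lambda_1+2Ca\,f^{-2}$ in the barrier sense on $M\setminus B_p(2022n)$. Next I would set $u:=\lambda_1 - a f^{-1} - Ma f^{-2}$ for a suitable constant $M=M(n)$, and use \eqref{basic} together with \eqref{feq} to check $\Delta_f u\le u$ on $M\setminus B_p(2022n)$; the constant $2022n$ is presumably chosen (via Theorem \ref{pote}, which involves $5n$) precisely so that $M\setminus B_p(2022n)$ sits inside the region where \eqref{basic} holds and the lower-order $f^{-2}$ terms have the right sign. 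For the boundary condition, on $\partial B_p(2022n)$ we have $\lambda_1\ge a$ by definition of $a$, while $af^{-1}+Maf^{-2}<a$ there once $2022n$ is large enough (here one genuinely needs $f\ge$ something like $2$ on that sphere, which follows from Theorem \ref{pote} since $2022n-5n$ is large), so $u>0$ on $\partial B_p(2022n)$. By the maximum principle for $\Delta_f$ (as in Theorem \ref{thm1}'s proof) we then get $u\ge 0$ on $M\setminus B_p(2022n)$, hence $Ric\ge \lambda_1\ge b f^{-1}$ on all of $M$ for some $b>0$. Following the integral-curve argument of \cite{[MW]} verbatim, this forces $S\ge b\ln f$, so $S\ge n$ outside a compact set, and then the volume comparison $\int_{B_p(r)}S\le \tfrac n2 V(B_p(r))$ against $c_1 r\le V(B_p(r))\le c_2 r^n$ gives a contradiction for large $r$ — so $M$ is compact, contradicting noncompactness.

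The only subtlety — and the main obstacle — is bookkeeping rather than ideas: one must verify that the \emph{specific} radius $2022n$ is large enough to simultaneously (i) place $\partial B_p(2022n)$ outside $\partial B_p(5n)$ so that $f>0$ and the estimates of Theorem \ref{pote} are effective, (ii) make $af^{-1}+Maf^{-2}<a$ on that sphere, and (iii) make the coefficient of $f^{-2}$ in $\Delta_f u - u$ nonpositive. Point (iii) is where $a=\inf_{\partial B_p(2022n)}\lambda_1$ being small matters: since $a\to 0$ as the radius grows, a bound like $2Ca+\tfrac n2 a - (\text{something})Ma<0$ will hold once $a$ is small, i.e.\ once the radius is large, and $2022n$ is a safe explicit choice. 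Because all three requirements are ``radius large enough'' conditions with constants depending only on $n$, and $2022n$ comfortably dominates the $5n$ and $\sqrt{2n}$ appearing in Theorem \ref{pote}, the verification is routine; I would state the inequalities explicitly and note that each holds for $r\ge 2022n$, then invoke the argument of Theorem \ref{thm1} for the contradiction. Thus the proposition follows.
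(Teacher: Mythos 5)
Your proposal is correct and follows essentially the same route as the paper: argue by contradiction, derive $\Delta_f\lambda_1\le\lambda_1+2a\,r^{-4}$ from \eqref{evoRic}, form the barrier $u=\lambda_1-af^{-1}-Maf^{-2}$ (the paper takes $M=2022n^2$), verify $\Delta_f u\le u$ on $M\setminus B_p(2022n)$ and $u>0$ on $\partial B_p(2022n)$ via Theorem \ref{pote} and \eqref{basic}, and then run the Munteanu--Wang integral-curve and volume argument from Theorem \ref{thm1}. One small correction to your point (iii): since the hypothesized lower bound $-a\,r^{-4}$ and the barrier both carry the same factor $a$, the coefficient inequality is homogeneous in $a$ and reduces to something like $2C+\tfrac n2-\tfrac M2\le 0$, so it is the choice of $M$ (not the smallness of $a$) that makes it close — which is exactly why the paper can fix the radius $2022n$ once and for all instead of letting it grow until $a$ is small, as one must do in Theorem \ref{asynoRic} where the constant $c_1$ in the curvature hypothesis is independent of $a$.
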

\begin{remark}
In Theorem \ref{asynon}, we only provide a fixed radius
$2022n$, which is of course not best and it is possibly improved to a smaller one.
\end{remark}

\begin{proof}[Proof of Proposition \ref{asynon}]
The proof is by contradiction. Assume that there exists a complete noncompact
shrinker $(M, g, f)$ with $Ric>0$ satisfying
\[
S(x)\cdot K_{e_1}(x)\ge-a\,r^{-4}(x)
\]
for all $x\in M$, where $a:=\inf_{\partial B_p(2022n)}\lambda_1>0$.
Combining these constrictions and \eqref{evoRic}, we have that
\begin{equation}
\begin{aligned}\label{extra}
\Delta_f\lambda_1&\le\lambda_1-2(K_{12}\lambda_2+K_{13}\lambda_3+\ldots+K_{1n}\lambda_n)\\
&\le\lambda_1+\frac{2a}{S\cdot r^4(x)}(\lambda_2+\lambda_3+\ldots+\lambda_n)\\
&=\lambda_1+\frac{2a}{S\cdot r^4(x)}(S-\lambda_1)\\
&\le\lambda_1+2a\,r^{-4}(x)
\end{aligned}
\end{equation}
in the barrier sense. Consider the function
\[
u:=\lambda_1-af^{-1}-2022n^2a f^{-2}.
\]
Since $a:=\inf_{\partial B_p(2022n)}\lambda_1>0$, then
\[
u>0\quad\text{on}\quad \partial B_p(2022n),
\]
where we used Theorem \ref{pote}:
\[
\frac 14\big(2022n-5n\big)^2\le f(x)
\le\frac 14\left(2022n+\sqrt{2n}\right)^2
\quad\text{on}\quad \partial B_p(2022n).
\]
On $M\setminus B_p(2022n)$, by \eqref{basic} and \eqref{extra}, we get
\begin{equation*}
\begin{aligned}
\Delta_fu&\le\lambda_1+2ar^{-4}(x)-af^{-1}+\frac{na}{2}f^{-2}-3033n^2af^{-2}\\
&=u+2ar^{-4}(x)-\frac{n}{2}(2022n-1)af^{-2}
\end{aligned}
\end{equation*}
on $M\setminus B_p(2022n)$. Moreover, Theorem \ref{pote} gives
\[
\frac 14(r(x)-5n)^2\le f(x)\le\frac 14(r(x)+\sqrt{2n})^2
\]
on $M\setminus B_p(2022n)$, and so
\[
2ar^{-4}(x)-\frac{n}{2}(2022n-1)af^{-2}\le 0.
\]
Therefore we show that
\[
\Delta_fu\le u\quad \text{on} \quad M\setminus B_p(2022n),
\]
with $u>0$ on $\partial B_p(2022n)$.
In the rest, we follow the argument as Theorem \ref{thm1}
and prove that $M$ is compact. This contradicts our noncompact
assumption.
\end{proof}

\textbf{Acknowledgement}.
The authors thank Xiaolong Li for making them aware of the work of
his joint work \cite{[LN]}. The first author would like to thank
Jianyu Ou and Yuanyuan Qu for helpful discussion.

\bibliographystyle{amsplain}

\end{document}